\newcommand{\Aut}{\operatorname{Aut}}
\newcommand{\Soc}{\operatorname{Soc}}
  \newtheorem*{notation*}{Notation}
\newcommand{\du}{\overline{U}}
\newtheorem{theorem}{Theorem}[section]
\newtheorem{lemma}[theorem]{Lemma}
\newtheorem{fact}[theorem]{Fact}
\newtheorem*{theorem*}{Theorem}
\newtheorem*{definition*}{Definition}
\newtheorem*{conjecture*}{Conjecture}
\newtheorem*{maintheorem*}{Main Theorem}
\newtheorem*{lemma*}{Lemma}
\newtheorem{propo}[theorem]{Proposition}
\theoremstyle{definition}
\newtheorem{conjecture}[theorem]{Conjecture}
\newtheorem{definition}[theorem]{Definition}
\newtheorem{remark}[theorem]{Remark}
\newtheorem*{remark*}{Remark}
\newtheorem*{claim*}{Claim}
\newtheorem{example}{Example}
\title[Fixed point subgroups of a supertight automorphism]{Fixed point subgroups of a supertight automorphism}
\author[U. Karhum\"{a}ki]{Ulla Karhum\"{a}ki}
\address{University of Helsinki}
\email{ukarhumaki@gmail.com}
\thanks{The author is funded by the Finnish Science Academy grant no: 338334.}
\begin{document}
\maketitle

\maketitle

\begin{abstract} Let $G$ be an infinite simple group of finite Morley rank and $\alpha$ a supertight automorphism of $G$ so that the fixed point subgroup $P_n:=C_G(\alpha^n)$ is pseudofinite for all $n\in \mathbb{N}\setminus\{0\}$. It is know (using CFSG) that the socle $S_n:={\rm Soc}(P_n)$ is a (twisted) Chevalley group over a pseudofinite field. We prove that there is $r\in \mathbb{N}\setminus\{0\}$ so that for each $n$ we have $[P_n:S_n] < r$ and that there is no $m \in \mathbb{N}\setminus \{0\}$ so that for each $n$ the sizes of the Sylow $2$-subgroups of $S_n$ are bounded by $m$. We also note that in the recent identification result of $G$ under the assumption ${\rm pr}_2(G)=1$, the use of CFSG is not needed. \end{abstract}

\section{Introduction}Groups of finite Morley rank are groups equipped with a notion of dimension which assigns to every definable set a dimension, called the \emph{Morley rank}, satisfying well-known axioms given in \cite{Borovik-Nesin, ABC}. It is conjectured independently by Cherlin and Zilber that simple groups of finite Morley rank are isomorphic to Chevalley groups over algebraically closed fields \cite{Cherlin1979, Zilber1977}.

In \cite{Ugurlu2009}, U\u{g}urlu defined a \emph{supertight automorphism} for groups of finite Morley rank which behaves like a very large power of the Frobenius map. Consequently, there is an ongoing project, which we call the \emph{supertight programme}, towards proving that the Cherlin-Zilber conjecture is equivalent to another conjecture, which proposes that the fixed point subgroup of a generic automorphism of an infinite simple group of finite Morley rank is pseudofinite \cite{Karhumaki-Ugurlu, Deloro-Karhumaki-Ugurlu}. This project and its aims are discussed in \cite[Introduction]{Karhumaki-Ugurlu}. 

The definition of the \emph{definable closure} $\overline{C_{H}(\alpha)}$ is given in Section~\ref{subsec:fRM}.

\begin{definition*}An automorphism $\alpha$ of a connected group of finite Morley rank $G$ is called \emph{tight} if, for any connected definable and set-wise $\alpha$-invariant subgroup $H$ of $G$, we have $\overline{C_{H}(\alpha)}=H$. Further, $\alpha$ is called \emph{supertight} if the following holds. \begin{enumerate}
\item For each $n\in \mathbb{N} \setminus \{0\}$ the power $\alpha^n$ is a tight automorphism of $G$.
\item \label{fixed-points} For any $m,n\in \mathbb{N} \setminus \{0\}$, if $m|n$ and $m\neq n$, then $C_G(\alpha^m)<  C_G(\alpha^n)$.
\end{enumerate} 
\end{definition*}

Let $G$ be a simple group of finite Morley rank with a supertight automorphism $\alpha$ whose fixed point subgroup $C_G(\alpha^n)$ is pseudofinite for each $n\in \mathbb{N}\setminus \{0\}$. Set $S:={\rm Soc}(C_G(\alpha))$ to be the socle of $C_G(\alpha)$. It is known that $S$ is isomorphic to a (twisted) Chevalley group $X(F)$ over a pseudofinite field $F$. This fact uses CFSG. By the definition of $\alpha$, the simple group $G$ is equal to the definable closure $\overline{S}$ and therefore it is expected that $G$ is isomorphic to a Chevalley group $X(K)$, of the same Lie type $X$ as $S$, over an algebraically closed field $K$. If the Pr\"{u}fer $2$-rank ${\rm pr}_2(G)$ (defined in Section~\ref{sec:CFSG}) of $G$ is $1$, then this expectation holds, that is, $G\cong {\rm PGL}_2(K)$ for some algebraically closed field $K$ of characteristic $\neq 2$ \cite[Theorem 1.2]{Karhumaki-Ugurlu}; important parts of this proof are to understand the structure of a centraliser of an involution in $C_G(\alpha)$ and to show that the quotient $C_G(\alpha)/S$ is finite. Also, as the fact that $S\cong X(F)$ is used, CFSG is used in the background of the proof.

The main results in this paper generalise some of the work in \cite{Karhumaki-Ugurlu}:

\begin{theorem}\label{th:sylows}Let $G$ be a simple group of finite Morley rank with a supertight automorphism $\alpha$ whose fixed point subgroup $P_n:=C_G(\alpha^n)$ is pseudofinite for all $n\in \mathbb{N}\setminus \{0\}$. Set $S_n:={\rm Soc}(P_n)$. Then the following holds. \begin{enumerate}[(a)]
\item There is $r\in \mathbb{N}\setminus \{0\}$ so that for each $n$, $[P_n:S_n] < r$.
\item There is no $m \in \mathbb{N}\setminus \{0\}$ so that for each $n$ the sizes of the Sylow $2$-subgroups of $S_n$ are at most $m$.
\end{enumerate}
\end{theorem}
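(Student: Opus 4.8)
The plan is to study each $P_n$ through its conjugation action on $S_n$. Since $P_n$ is pseudofinite and $S_n=\Soc(P_n)$ is a non-abelian simple (twisted) Chevalley group $X_n(F_n)$ over a pseudofinite field $F_n$ — this is where CFSG enters, as recalled above — one has $C_{P_n}(S_n)=1$, so $P_n/S_n$ embeds into $\Out(S_n)$. By Steinberg's description of the automorphisms of a simple group of Lie type, $\Out(S_n)=\Diag_n\rtimes(\Graph_n\times\Phi_n)$, with $\Diag_n$ the abelian group of diagonal automorphisms, $\Graph_n$ the finite group of graph automorphisms and $\Phi_n\cong\Aut(F_n)$ the (abelian) group of field automorphisms; in particular $\Out(S_n)$ has no non-abelian infinite simple section. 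It follows that whenever $m\mid n$ the inclusion $P_m\le P_n$ maps the simple group $S_m$ trivially into $P_n/S_n$, so $S_m\le S_n$ and, correspondingly, $F_m\le F_n$: the socles and the pseudofinite fields form ascending chains along divisibility.

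For part~(a), the contributions of $\Diag_n$ and $\Graph_n$ are bounded absolutely: a maximal torus of $X_n(F_n)$ is an abelian subgroup of $G$ whose definable closure has Morley rank at least the Lie rank of $X_n$, so that Lie rank is at most $\mathrm{rk}(G)$, and then $|\Diag_n|$ and $|\Graph_n|$ are bounded by functions of the Lie type alone. The remaining, and main, task is to bound the order of the image of $P_n/S_n$ in $\Phi_n$. I would approach this as follows: if $\varphi\in P_n$ induces a field automorphism of order $k$ on $S_n$, then $\varphi\ne 1$ (as $C_G(S_n)=Z(G)=1$), so $C_G(\varphi)$ is a proper, $\alpha^n$-invariant definable subgroup of $G$; tightness of $\alpha^n$ (axiom~(1)) applied to $C_G(\varphi)^\circ$, together with $C_{S_n}(\varphi)$ being a (twisted) Chevalley subgroup of $S_n$ over a subfield of index $k$, then shows that the definable closures of the centralisers of powers of $\varphi$ form a descending chain of proper definable subgroups of $G$ indexed by the divisor lattice of $k$, whose length is therefore at most $\mathrm{rk}(G)$. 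The genuine obstacle is to upgrade this into an absolute bound on $k$ itself: a bound on the length of the chain only bounds the number of prime divisors of $k$ counted with multiplicity, so one must also bound the primes that can occur. This, I expect, is the heart of the matter, and is where one must bring in the full strength of supertightness — in particular the strict chain of axiom~(2), which pins the relevant Galois data of $F_n$ to the single automorphism $\alpha$ and leaves only bounded room for a further field automorphism sitting inside $P_n$. Granting this, $[P_n:S_n]<r$ for some $r$ depending only on $\mathrm{rk}(G)$.

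For part~(b) I would argue by contradiction, combining part~(a) with axiom~(2). Along the tower $1\mid 2\mid 4\mid\cdots$, axiom~(2) makes $P_1<P_2<P_4<\cdots$ strictly increasing, while part~(a) keeps every index $[P_{2^i}:S_{2^i}]$ below $r$; hence the ascending chain of socles $S_1\le S_2\le S_4\le\cdots$ must be strictly increasing for infinitely many $i$, since an index bound below $r$ permits only boundedly many consecutive equalities. For such $i$ the simple Chevalley groups $S_{2^i}<S_{2^{i+1}}$ have Lie rank at most $\mathrm{rk}(G)$, so, since their pseudofinite orders strictly increase infinitely often while their Lie ranks stay bounded, the fields $F_{2^i}$ grow without bound; and since $\alpha^{2^i}$ acts on $S_{2^{i+1}}$ with square acting trivially, its (nontrivial, order-two) action has a nontrivial field part whenever the field genuinely grows, so the degree $[F_{2^{i+1}}:F_{2^i}]$ is then even. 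Consequently, along the subsequence of growth steps the degrees accumulate unboundedly many factors of $2$, and it remains to deduce that $|\mathrm{Syl}_2(S_n)|\to\infty$: in characteristic $2$ a Sylow $2$-subgroup of $X(F)$ is a maximal unipotent subgroup, of order a fixed positive power of $|F|$, which tends to infinity; in characteristic $\ne 2$ a Sylow $2$-subgroup lies in the normaliser of a maximal torus, and the $2$-adic valuation of the order of that torus grows with the $2$-adic valuation of the degree of the field extension, hence is unbounded. Either way the Sylow $2$-subgroups of the $S_n$ are unbounded. I expect the fussiest point here to be the bookkeeping with the action of $\alpha^{2^i}$ on $S_{2^{i+1}}$ — in particular, ruling out that the growth of the socle is absorbed entirely into graph automorphisms carrying no field part.
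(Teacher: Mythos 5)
Your reduction of (a) to bounding the field-automorphism part of $P_n/S_n$ is the right target, but the proof stops exactly where the real work begins, and you say so yourself: the descending chain of definable closures of centralisers of powers of $\varphi$ bounds only the number of prime divisors (with multiplicity) of the order $k$, not $k$, and the decisive step is introduced with ``granting this''. That granted step \emph{is} the proposition. Two further holes compound it: nothing in your argument rules out an element of $P_n$ inducing a field automorphism of \emph{infinite} order (you tacitly assume the image in ${\rm Aut}(F_n)$ is torsion), and your assertion that the resulting bound $r$ depends only on ${\rm rk}(G)$, i.e.\ uniformity in $n$, is nowhere argued. The paper closes these gaps by a different mechanism: setting $O={\rm Out}(S)\cap P$, it uses the Borel/torus/unipotent structure of $S$, definable closures, and the abelian-by-finiteness of $P/S$ (Fact~\ref{propo:pf}) to show that $O^\circ$ centralises $T^\circ$, and then kills the field part by the elementary observation that a field automorphism acting trivially on a finite-index subgroup of $F^\times$ is trivial (a coset-counting argument in $F$); uniformity in $n$ comes from Lemma~\ref{lemma:intersections-P} together with finite centraliser dimension, which stabilises $C_G(T_n)$ after replacing $\alpha$ by a power and bounds everything by $[N_G(\overline{T}_1):C_G(T_1)]$. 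Without a substitute for these steps, part (a) is not proved.

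Part (b) then stacks further unproven claims on top of (a): that strict growth $S_{2^i}<S_{2^{i+1}}$ forces a proper, compatibly embedded field extension $F_{2^i}\subsetneq F_{2^{i+1}}$; that the abstract automorphism induced by $\alpha^{2^i}$ on $S_{2^{i+1}}$ admits an inner--diagonal--graph--field decomposition at all (Steinberg's theorem is not available off the shelf for an automorphism of a Chevalley group over a pseudofinite field arising from an automorphism of $G$); and that its field part is nontrivial whenever the field grows --- you flag this last point yourself as the ``fussiest'', i.e.\ ruling out absorption into graph (or diagonal) automorphisms, and it is left open. The paper's Proposition~\ref{lemma:inf-sylow} proceeds quite differently and needs none of this: assuming a uniform bound $m$, Kondrat'ev's theorem forces $N^\circ_G(\Theta)=C^\circ_G(\Theta)$ to be abelian for the finite Sylow $2$-subgroup $\Theta$ of $S_1$; if it is trivial one grows $\Theta$ inside a Sylow $2$-subgroup of $G$ by the normaliser condition, each finite stage being fixed pointwise by a suitable power of $\alpha$, and if it is infinite one uses the $2$-torus $(\mathbb{Z}_{2^\infty})^k$ it contains, whose elements of each order $2^s$ are again fixed by powers of $\alpha$; either way some $P_n$ acquires a Sylow $2$-subgroup of size exceeding $m$. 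The pivotal trick --- finite $\alpha$-invariant sets are fixed pointwise by a power of $\alpha$, combined with axiom (2) of supertightness --- does not appear in your sketch. Finally, note that the theorem also requires the preliminary reduction (simplicity plus tightness give $C_{P_n}(H_n)=1$ for subnormal $H_n$, so Fact~\ref{propo:pf} applies, and the even type case is handled separately), which your proposal omits.
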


\begin{propo}\label{th:CFSG}Theorem 1.2 in \cite{Karhumaki-Ugurlu} does not use CFSG.
\end{propo}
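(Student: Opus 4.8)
The plan is to go through the proof of Theorem~1.2 in \cite{Karhumaki-Ugurlu} and to isolate every appeal to the classification of finite simple groups. CFSG enters that proof in exactly one way: through the assertion that the socle $S={\rm Soc}(C_G(\alpha))$ is isomorphic to a (twisted) Chevalley group $X(F)$ over a pseudofinite field, which is the theorem of Wilson (and Ryten) on simple pseudofinite groups. Everything else used there is pure finite Morley rank theory, or the (CFSG-free) model theory of pseudofinite fields. So it suffices to show that the structural information about $C_G(\alpha)$ extracted from ``$S\cong X(F)$'' in that argument is available by CFSG-free means, and here the standing hypothesis ${\rm pr}_2(G)=1$ is decisive.

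First I would record that ${\rm pr}_2(G)=1$ means the connected component of a Sylow $2$-subgroup of $G$ is a copy of $\mathbb{Z}_{2^\infty}$, so $G$ is of odd type: it has an involution, hence is not of degenerate type, and its Sylow $2$-subgroup is purely divisible, hence not $2$-unipotent and $G$ is not of even or mixed type. Then, instead of reading the structure of the centraliser of an involution $i$ of $C_G(\alpha)$ off the isomorphism $S\cong X(F)$, I would work directly inside $G$: for an involution $i$ of $G$ the centraliser $C_G(i)$ is a definable group of finite Morley rank of odd type with Pr\"{u}fer $2$-rank at most $1$, and the CFSG-free $2$-structure theory of such groups controls it --- in particular $C_G(i)^\circ$ is solvable. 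This is precisely the input for the CFSG-free identification of a connected simple group of finite Morley rank of odd type with Pr\"{u}fer $2$-rank $1$ as ${\rm PSL}_2(K)$ over an algebraically closed field $K$ of characteristic $\neq 2$ (see \cite{Cherlin1979, ABC} and the odd-type literature cited there). Since $K$ is algebraically closed, ${\rm PSL}_2(K)={\rm PGL}_2(K)$, so $G\cong {\rm PGL}_2(K)$ is obtained with no use of $S\cong X(F)$; indeed this part of the argument no longer refers to $C_G(\alpha)$ at all.

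It then remains to recover, CFSG-free, the auxiliary statement from \cite{Karhumaki-Ugurlu} that $C_G(\alpha)/S$ is finite. Once $G\cong {\rm PGL}_2(K)$ is known, $C_G(\alpha)$ is a pseudofinite subgroup of ${\rm PGL}_2(K)$ which is Zariski-dense, since by tightness its definable closure $\overline{C_G(\alpha)}$ is all of $G$. By classical, CFSG-free facts about Zariski-dense pseudofinite subgroups of ${\rm PGL}_2$ over an algebraically closed field --- ultimately resting on Dickson's classification of the finite subgroups of ${\rm PGL}_2$ --- the socle $S$ is ${\rm PSL}_2(F)$ for a pseudofinite subfield $F\leq K$. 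The normaliser of ${\rm PSL}_2(F)$ in ${\rm PGL}_2(K)$ is ${\rm PGL}_2(F)$, and since $C_G(\alpha)$ normalises its own socle we get $S\leq C_G(\alpha)\leq {\rm PGL}_2(F)$; as $[{\rm PGL}_2(F):{\rm PSL}_2(F)]=|F^\times/(F^\times)^2|\leq 2$ for a pseudofinite field, $[C_G(\alpha):S]\leq 2$.

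I expect the main difficulty to be bookkeeping rather than new mathematics: one has to check that the proof in \cite{Karhumaki-Ugurlu} never uses ``$S\cong X(F)$'' in a way not covered by these substitutes --- for instance that it does not need the full Wilson--Ryten description merely to produce an involution of $S$, or to describe $C_S(i)$ more finely than the ambient $C_G(i)$ already does, or to carry out the definable-closure step --- and that the reorganised argument derives ``$C_G(\alpha)/S$ finite'' from the identification of $G$ without circularity. One must likewise confirm that every recognition theorem used along the CFSG-free route (the odd-type $2$-structure theory, the identification of ${\rm PSL}_2$ from its involution centralisers, and the description of Zariski-dense pseudofinite subgroups of ${\rm PGL}_2$) is genuinely independent of CFSG.
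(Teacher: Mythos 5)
There is a genuine gap, and it sits at the heart of your reorganised argument: you appeal to ``the CFSG-free identification of a connected simple group of finite Morley rank of odd type with Pr\"{u}fer $2$-rank $1$ as ${\rm PSL}_2(K)$'' and, on the way there, to the assertion that $C_G(i)^\circ$ is solvable for an involution $i$. No such identification theorem exists --- the Pr\"{u}fer $2$-rank $1$ case of the odd-type classification is open (the control of involution centralisers is exactly the unresolved difficulty), and this is precisely why Theorem 1.2 of \cite{Karhumaki-Ugurlu} needs the supertight automorphism and the pseudofiniteness of $C_G(\alpha)$ at all. If the theorem you invoke were available, Theorem 1.2 would be an immediate corollary with no hypotheses on $\alpha$, which it is not. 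The rest of your plan inherits this problem: the Dickson/Zariski-density argument for $[C_G(\alpha):S]\leq 2$ is run only after $G\cong{\rm PGL}_2(K)$ is known, i.e.\ after the conclusion of Theorem 1.2, so without the missing identification theorem the reorganisation is circular.

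The paper takes a different, and workable, route: it keeps the original proof of Theorem 1.2 intact and replaces only its single CFSG-dependent input, namely the identification of the socle via Wilson's theorem (Fact 2.2(a)). Under ${\rm pr}_2(G)=1$ the Sylow $2$-structure of $G$ (Deloro--Jaligot) forces $m_2(P)\leq 2$ for $P=C_G(\alpha)$; writing $P$ as an ultraproduct of finite groups $P_i$, the pre-CFSG classification of finite simple groups of $2$-rank at most $2$ pins down ${\rm Soc}(P_i)$, a first-order definition of the socle (via covering properties of conjugacy classes) transfers this through the ultraproduct, and Gorenstein--Walter (also CFSG-free) then gives ${\rm Soc}(P)\cong{\rm PSL}_2(F)$ over a pseudofinite field of characteristic $\neq 2$. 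With that substitute in place, the original argument of \cite{Karhumaki-Ugurlu} --- including the finiteness of $C_G(\alpha)/S$ and the construction identifying $G$ --- goes through verbatim and never touches CFSG. Your first paragraph correctly locates where CFSG enters, but the substitute you propose for it is not available; the correct substitute is a CFSG-free identification of the socle, not of $G$.
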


Our results are almost surely needed in the future when aiming to prove the expectation that $G$ is isomorphic to $X(K)$. Actually, we believe that in the future work one needs a sligthly more general set-up than in Theorem~\ref{th:sylows} and therefore we prove results in this level of generality (Proposition~\ref{propo:finite-index} and Proposition~\ref{lemma:inf-sylow}).

Our paper is organised as follows. In Section~\ref{sec:preliminaries} we give all the background results which are needed in the proof of Theorem~\ref{th:sylows} and in Section~\ref{sec:proofs} we prove Theorem~\ref{th:sylows}. Then, in Section~\ref{sec:CFSG}, we discuss the role of CFSG  in the supertight programme.

\section{Preliminaries}\label{sec:preliminaries}

 \subsection{Chevalley groups}\label{subsec:Chevalley}We denote a Chevalley group of \emph{Lie type} $X$ over an arbitrary field $k$ by $X(k)$, where $X$ comes from the list $A_n $, $B_n$, $C_n$, $D_n$, $E_6$, $E_7$, $E_8$, $F_4$, $G_2$; the subscript is called the \emph{Lie rank} of $X(k)$. If the Dynkin diagram has a non-trivial symmetry and the field $k$ satisfies suitable additional conditions, then $X(k)$ may be of \emph{twisted type}. In general, we refer the reader unfamiliar with (twisted) Chevalley groups to \cite{Carter1971}. Below, we discuss those properties of simple (twisted) Chevalley groups which are used in this paper, for all that we cite \cite{Carter1971}.

Let $G=X(k)$ be a simple (twisted) Chevalley group over a finite (or a pseudofinite) field $k$. Then $G$ is generated by the\emph{ root subgroups} $X_s$, each of which is isomorphic to the additive group $k^+$ (here $s\in \Phi$ is a root). Denote by $U$ the subgroups of $G$ generated by $X_s$ for all positive roots $s \in \Phi^+$. There is a surjective homomorphism $\phi_s$ from ${\rm SL}_2(k)$ to the group $R_s=\langle X_s, X_{-s}\rangle \leqslant G$ with ${\rm Ker}(\phi_s)\leqslant \{\pm I_2\}$. So either $R_s \cong {\rm SL}_2(k)$ or $R_s \cong {\rm PSL}_2(k)$. We call the group $R_s$ a \emph{${\rm SL}_2$-rootsubgroup} of $G$.

Let $0\neq \lambda \in k$. Define $h_s(\lambda)\in G$ and $n_s \in G$ respectively as the images of the diagonal matrix $\begin{pmatrix} \lambda & 0 \\ 0 &\lambda^{-1}  \end{pmatrix}$ and the monomial matrix  $\begin{pmatrix} 0 & 1 \\ -1 & 0 \end{pmatrix}$  under the map $\phi_s$. Then define $T=C_G(T)=\langle h_s(\lambda): s\in \Phi \rangle$, $N_G(T)=\langle T, n_s : s\in \Phi \rangle$ and $B=U \rtimes T.$ We have $N_G(U)=B$ and $N_B(T)=T$. The groups $B$, $U$ and $T$ are respectively called the \emph{Borel subgroup}, the \emph{unipotent subgroup} and the \emph{maximal split torus} of $G$. The unipotent subgroup $U$ is the Fitting subgroup of $B$ (i.e. the maximal normal nilpotent subgroup).

There are four types of generators of automorphisms of $G=X(k)$; they generate \emph{inner}, \emph{diagonal}, \emph{field} and \emph{graph} automorphisms. We denote by ${\rm Aut}(X)$, ${\rm Inn}(X)$, ${\rm Diag}(X)$, ${\rm Aut}(k)$ and ${\rm Grp}(X)$ the group of all, inner, diagonal, field and graph automorphisms of $X(k)$, respectively. Any element of ${\rm Aut}(X)$ factors uniquely into the product $xdgf$, where $x\in G$, $d\in {\rm Diag}(X)$, $g \in {\rm Grp}(X)$ and $f\in {\rm Aut}(k)$.

\subsection{Pseudofinite groups}Fix a language $\mathcal{L}$. A \emph{definable set} in an
$\mathcal{L}$-structure $\mathcal{M}$ is a subset $X\subseteq M^n$ which is the set of realisations of a first-order $\mathcal{L}$-formula $\varphi$. In this paper,  definable means definable possibly with parameters and we consider groups (resp. fields) as structures in the pure group (resp. field) language $\mathcal{L}_{gr}$. An \emph{$\mathcal{L}$-sentence} is an $\mathcal{L}$-formula in which all variables are bound by a quantifier. Two $\mathcal{L}$-structures $\mathcal{M}$ and $\mathcal{N}$ are \emph{elementarily equivalent}, denoted by $\mathcal{M} \equiv \mathcal{N}$, if they satisfy the same $\mathcal{L}$-sentences.

\begin{definition}A \emph{pseudofinite} group is an infinite group which satisfies every first-order sentence of $\mathcal{L}_{gr}$ that is true of all finite groups. \end{definition}

Let $\{ \mathcal{M}_i : i \in I \}$ be a family of
$\mathcal{L}$-structures and $\mathcal{U}$ be a non-principal
ultrafilter on $I$. We denote the ultraproduct, with respect to the ultrafilter $\mathcal{U}$, of the $\mathcal{L}$-structures $\mathcal{M}_i$ by $ \prod_{i \in I} \mathcal{M}_i / \mathcal{U}$. By \L o\'{s}'s Theorem, an infinite group (resp. $\mathcal{L}$-structure) is pseudofinite if and only if it is elementarily equivalent to a non-principal ultraproduct of finite groups (resp. $\mathcal{L}$-structures) of increasing orders, see \cite{Macpherson2018}.

Let $G$ be a group and $k$ be a positive integer. We say that $G$ is of \emph{centraliser dimension} $k$ if the longest proper descending chain of centralisers in $G$ has length $k$. If such an integer $k$ exists, then $G$ is said to be of \emph{finite centraliser dimension}.

The \emph{socle} of a group $G$, denoted by ${\rm Soc}(G)$, is the subgroup generated by all minimal normal (not necessarily proper) subgroups of $G$. 

A subgroup $H$ of a group $G$ is \emph{subnormal} if there is a finite ascending chain of subgroups starting from $H$ and ending at $G$, so that each is a normal subgroup of its successor. This is denoted by $H \unlhd\unlhd G$.

The following result describes the pseudofinite fixed point subgroups of supertight automorphisms; it is an important part of the supertight programme and its proof uses CFSG.

\begin{fact}[{\cite[Propostition 3.1]{Karhumaki-Ugurlu}}] \label{propo:pf}Let $G$ be a pseudofinite group of finite centraliser dimension. If for any non-trivial $H  \unlhd\unlhd  G$ we have $C_G(H)=1$, then the following hold. \begin{enumerate}[(a)]
\item \label{socleG} ${\rm Soc}(G)$ is $\emptyset$-definable in $G$ and ${\rm Soc}(G) \cong X(F)$, where $X(F)$ is a (twisted) Chevalley group over a pseudofinite field $F$.
\item \label{abelian-by-finite} $G/{\rm Soc}(G)$ is abelian-by-finite.
\end{enumerate}
\end{fact}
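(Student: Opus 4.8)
The plan is to realise $G$ as an ultraproduct of finite groups, to transfer the hypotheses to the components, to identify the socle of each component by the classification of finite simple groups, and then to recombine. Since $G$ is pseudofinite, by \L o\'{s}'s Theorem we may fix a non-principal ultrafilter $\mathcal{U}$ on an index set $I$ and finite groups $M_i$ of increasing orders with $G\equiv\mathcal{M}:=\prod_{i\in I}M_i/\mathcal{U}$. Having centraliser dimension $\leq k$ is, for each fixed $k$, a single first-order sentence, so it transfers: for $\mathcal{U}$-almost all $i$ we have $\mathrm{cd}(M_i)\leq k$, where $k=\mathrm{cd}(G)$. The hypothesis is not first-order as stated, but it has a first-order consequence that is: if $1\neq H\unlhd\unlhd G$ were soluble, the last non-trivial term of its derived series would be a non-trivial abelian subnormal subgroup of $G$ centralising itself, contradicting $C_G(H)=1$; hence the soluble radical $R(G)$, being soluble and normal, is trivial. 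The soluble radical of a finite group is uniformly $\emptyset$-definable (a consequence of CFSG), so the sentence ``$R(\cdot)=1$'' transfers, and for $\mathcal{U}$-almost all $i$ we get $R(M_i)=1$, hence $F(M_i)=1$.

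Fix such an $i$ with $|M_i|>1$ (which also holds for almost all $i$). Since $F(M_i)=1$ we have $F^{*}(M_i)=E(M_i)=\mathrm{Soc}(M_i)=T_{i,1}\times\cdots\times T_{i,t_i}$, a direct product of non-abelian finite simple groups, with $C_{M_i}(\mathrm{Soc}(M_i))=1$, so $\mathrm{Soc}(M_i)\leq M_i\leq\mathrm{Aut}(\mathrm{Soc}(M_i))$. Each factor contributes a strict step to a descending chain of centralisers in $M_i$, so $t_i\leq k$. By CFSG, a finite simple group of centraliser dimension $\leq k$ is one of finitely many sporadic groups, or an alternating group of degree $\leq n_0(k)$, or a (twisted) Chevalley group of Lie rank $\leq r_0(k)$, since alternating and classical groups of unbounded rank have unbounded centraliser dimension; thus every $T_{i,j}$ is a (twisted) Chevalley group of Lie rank $\leq r_0$. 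Refining $\mathcal{U}$, we may assume $t_i=t$ is constant and that for each $j\leq t$ all the $T_{i,j}$ ($i\in I$) share the same Lie type $X_j$. By Wilson's analysis of pseudofinite simple groups together with Ryten's uniform bi-interpretability of a (twisted) Chevalley group with its defining field, each $S_j:=\prod_i T_{i,j}/\mathcal{U}$ is isomorphic to $X_j(F_j)$ for a pseudofinite field $F_j$, and $\mathrm{Soc}(M_i)$ is defined in $M_i$ by one fixed formula $\sigma(x)$ (uniform definability of the socle in finite groups). Hence $\sigma$ defines $\prod_i\mathrm{Soc}(M_i)/\mathcal{U}\cong S_1\times\cdots\times S_t$ in $\mathcal{M}$; since $R(\mathcal{M})=1$ and the minimal subnormal subgroups are simple Chevalley groups of Lie rank $\leq r_0$, this description is first-order in this bounded setting, it identifies the subgroup with $\mathrm{Soc}(\mathcal{M})$, and the same description in $G$ shows $\sigma(G)=\mathrm{Soc}(G)$. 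In particular $\mathrm{Soc}(G)$ is $\emptyset$-definable, non-trivial, and $\mathrm{Soc}(G)\equiv S_1\times\cdots\times S_t$.

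It remains to fix the number of factors and to pass from elementary equivalence to isomorphism for $G$. A direct product of boundedly many groups each uniformly bi-interpretable with a field is again uniformly bi-interpretable with the tuple of fields, so $\mathrm{Soc}(G)\equiv\prod_{j=1}^{t}X_j(F_j)$ upgrades to $\mathrm{Soc}(G)\cong\prod_{j=1}^{t}X_j(F_j')$ for suitable pseudofinite fields $F_j'$. If $t\geq2$, then the direct factor $X_1(F_1')$ is normal in $\mathrm{Soc}(G)\unlhd\unlhd G$, hence subnormal in $G$, and non-trivial, while $C_G(X_1(F_1'))\supseteq\prod_{j\geq2}X_j(F_j')\neq1$, contradicting the hypothesis; therefore $t=1$ and $\mathrm{Soc}(G)\cong X(F)$ for a single pseudofinite field $F$, which is part (a). For part (b), the inclusion $\mathrm{Soc}(M_i)\leq M_i\leq\mathrm{Aut}(\mathrm{Soc}(M_i))$ gives an embedding $M_i/\mathrm{Soc}(M_i)\hookrightarrow\mathrm{Out}(\mathrm{Soc}(M_i))$; the outer automorphism group of a (twisted) Chevalley group of Lie rank $\leq r_0$ is virtually cyclic with ``virtual index'' bounded in terms of $r_0$ (its diagonal and graph parts are bounded and its field part is cyclic, see Section~\ref{subsec:Chevalley}), and since $t\leq k$, the group $\mathrm{Out}(\mathrm{Soc}(M_i))$ — hence $M_i/\mathrm{Soc}(M_i)$ — has an abelian subgroup of index at most a constant $N=N(k)$. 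Having an abelian subgroup of index $\leq N$ is a first-order property of the definable quotient, so it transfers to $\mathcal{M}$ and then to $G$: $G/\mathrm{Soc}(G)$ is abelian-by-finite.

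The main obstacle is the mismatch between the second-order flavour of both the hypothesis (subnormality of unbounded defect) and the conclusion (``$\mathrm{Soc}(G)$'', simplicity, isomorphism type) and the purely first-order transfer machinery. The resolution is that finite centraliser dimension bounds exactly the data actually needed — the defects of the particular subnormal subgroups invoked, the number of simple factors, and the Lie ranks — and that CFSG together with Ryten's bi-interpretability makes the socle and its simple constituents uniformly definable, so that every statement used in the transfer genuinely is first-order. The delicate point, which I expect to require the most care, is the verification that the uniform socle-formula $\sigma$ computes $\mathrm{Soc}(G)$ rather than merely some $\emptyset$-definable subgroup elementarily equivalent to $\mathrm{Soc}(\mathcal{M})$; this is exactly where the uniform definability of the simple constituents of the socle is indispensable.
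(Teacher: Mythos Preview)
This statement is quoted in the paper as a \emph{Fact} from \cite{Karhumaki-Ugurlu}, so the present paper gives no self-contained proof to compare against. The only piece of the argument the paper reproduces is the $\emptyset$-definability of the socle, which appears inside the proof of Proposition~\ref{propo:pf-noCFSG}: there the defining formula is written down explicitly, via the Ellers--Gordeev results, as
\[
\varphi(x):\quad \exists y\bigl(x\in y^{H}y^{H}\ \wedge\ y^{H}y^{H}\trianglelefteq H\ \wedge\ y^{H}y^{H}\ \text{has the $n$-covering property}\bigr),
\]
with $n$ depending only on the (bounded) Lie type. Your outline is in the same spirit as the cited source---ultraproduct, transfer of $R=1$, CFSG to bound the Lie ranks and the number of simple factors by the centraliser dimension, then reassemble---but you leave the socle formula $\sigma$ abstract, whereas the paper (and the source it cites) use this concrete $\varphi$.

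Two places in your write-up are not yet solid. First, the step ``the same description in $G$ shows $\sigma(G)=\operatorname{Soc}(G)$'' is asserted before you know $t=1$, and your only justification is that ``this description is first-order in this bounded setting''; that phrase does not by itself prove that the $\emptyset$-definable normal subgroup $\sigma(G)$ equals the socle of $G$. The clean way round is to argue with $H:=\sigma(G)$ throughout: $H\trianglelefteq G$ is non-trivial and, by \L o\'s together with bi-interpretability, $H\cong\prod_{j=1}^{t}X_j(F_j')$; each direct factor is normal in $H$, hence subnormal of depth~$2$ in $G$, and your centraliser argument then forces $t=1$. Now $H$ is simple, every minimal normal subgroup of $G$ meets $H$ nontrivially (otherwise it would centralise $H$), so $\operatorname{Soc}(G)\leq H$ and equality follows by simplicity. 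This removes the circularity you flag in your last paragraph. (Also, one does not ``refine $\mathcal{U}$''; rather, the finitely many possible values of $(t;X_1,\dots,X_t)$ partition $I$, and exactly one block lies in $\mathcal{U}$.)

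Second, for part~(b) you assert that ``having an abelian subgroup of index $\leq N$'' is a first-order property. That is not obvious in general, and you should not lean on it. What actually transfers is the explicit structure of $\operatorname{Out}(X(\mathbb{F}_{q_i}))$: the field-automorphism part is a \emph{uniformly definable} cyclic (hence abelian) normal subgroup of index bounded by $|\mathrm{Diag}(X)|\cdot|\mathrm{Graph}(X)|$, and this definability is what lets you pass to the ultraproduct and then to $G$ (compare the discussion at the end of Section~\ref{sec:not}). Phrase it that way and the transfer is immediate.
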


To our knowledge, the following remark does not appear anywhere in the literature.

\begin{remark} In Fact~\ref{propo:pf} one cannot prove further that $G/{\rm Soc}(G)$ is finite (cf. \cite{Borovik-Karhumaki}). Consider a pseudofinite field $F\equiv \prod_{p_i\in P} \mathbb{F}_{p_i^{p_i}}/\mathcal{U}$, where $p_i$'s are primes growing without a bound. Then for all  $1\neq a_i\in {\rm Aut}(\mathbb{F}_{p_i^{p_i}})$ we have ${\rm Fix}_{\mathbb{F}_{p_i^{p_i}}}(a_i)=\mathbb{F}_{p_i}$. Let$$G\equiv \prod_{p_i\in P}({\rm PSL}_2(\mathbb{F}_{p_i^{p_i}}) \rtimes {\rm Aut}(\mathbb{F}_{p_i^{p_i}}))/\mathcal{U} \equiv {\rm PSL}_2(F) \rtimes \prod_{p_i\in P}{\rm Aut}(\mathbb{F}_{p_i^{p_i}})/\mathcal{U}.$$ Then $G$ satisfies the assumptions of Fact~\ref{propo:pf} and  $G/{\rm Soc}(G)\equiv \prod_{i\in I}{\rm Aut}(\mathbb{F}_{p_i^{p_i}})/\mathcal{U}$ is infinite.\end{remark}

\subsection{Groups of finite Morley rank}\label{subsec:fRM}Groups of finite Morley rank are groups equipped with a notion of dimension which assigns to every definable set $X$ a dimension, called the \emph{Morley rank} and denoted by ${\rm rk}(X)$, satisfying well-known axioms given in \cite{Borovik-Nesin, ABC}. The following basic properties of groups of finite Morley rank are given in \cite{Borovik-Nesin, ABC}: Let $G$ be a group of finite Morley rank. By the Baldwin-Saxl chain condition, the length in $G$ of any proper chain of centralisers is bounded and one may define the \emph{connected component} $H^\circ$ of any subgroup $H \leqslant G$ and the \emph{definable closure} $\overline{X}$ of any subset $X \subseteq G$ as follows. If $L \leqslant G$ is definable then $L^\circ$ is the intersection of definable subgroups of finite indices in $L$, $\overline{X}$ is the intersection of all definable subgroups of $G$ containing $X$, and, for any $H \leqslant G$, $H^\circ=H \cap \overline{H}^\circ$. Note that $\overline{H}^\circ=\overline{H^\circ}$. We will use the following fact repeatedly in our proofs.

\begin{fact}[{\cite[Section 5.5]{Borovik-Nesin}}]\label{fact:def_closure}
Let $G$ be a group of finite Morley rank. Then the following hold.\begin{enumerate}[(1)]
	\item If $A \leqslant G$ normalises $X \subseteq G$, then $\overline{A}$ normalises $\overline{X}$.
	\item For any $X \subseteq G$, we have $C_G(X)= C_G(\overline{X})$.
		\item If $A \leqslant G$, then $\overline{A}'=\overline{A'}$.
	\item If $A \leqslant G$ is solvable (resp. nilpotent) of class $n$, then $\overline{A}$ is solvable (resp. nilpotent) of class $n$.
	\item Let $A \leqslant B \leqslant G$. If $[B:A] < \infty$ then $[\overline{B}:\overline{A}] < \infty$.
\end{enumerate}
\end{fact}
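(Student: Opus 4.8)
The plan is to isolate two ``bookkeeping'' principles and then turn the crank. The first is that, by the Baldwin--Saxl descending chain condition on definable subgroups (a consequence of finite Morley rank), the intersection of any family of definable subgroups equals a finite subintersection and is therefore again definable; in particular $\overline{X}$ is definable, as are $C_G(X)=\bigcap_{x\in X}C_G(x)$ and, for a definable subgroup $H$, the normaliser $N_G(H)$. The second is that definable images and preimages of definable sets are definable, so a definable automorphism of $G$ (e.g.\ conjugation by a fixed element) carries $\overline{X}$ onto the definable closure of its image, and a definable quotient map $\pi\colon G\to G/N$ with $N\trianglelefteq G$ definable satisfies $\pi(\overline{X})=\overline{\pi(X)}$ --- each inclusion being immediate once one applies ``definable image/preimage is definable'' to $\pi$. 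These dispose of (1) and (2) at once: for (1), $a\overline{X}a^{-1}=\overline{aXa^{-1}}=\overline{X}$ for every $a\in A$, so $A\leqslant N_G(\overline{X})$, a definable subgroup, whence $\overline{A}\leqslant N_G(\overline{X})$; for (2), the inclusion $C_G(\overline{X})\leqslant C_G(X)$ is trivial, and conversely $g\in C_G(X)$ gives $X\subseteq C_G(g)$, a definable subgroup, so $\overline{X}\subseteq C_G(g)$, i.e.\ $g\in C_G(\overline{X})$.

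Next I would record the case $n=1$ of item (4): if $A$ is abelian then $\overline{A}\leqslant C_G(A)=C_G(\overline{A})$ by (2), so $\overline{A}$ is abelian. For (3), one inclusion uses the standard fact --- obtained from Zilber's indecomposability theorem --- that the derived subgroup of a definable group is definable: then $\overline{A}'$ is a definable subgroup containing $A'$, whence $\overline{A'}\leqslant\overline{A}'$. For the reverse inclusion, note that $\overline{A'}\trianglelefteq\overline{A}$ by (1) (as $A$ normalises $A'$), pass to the definable quotient $\overline{A}/\overline{A'}$, observe that the image of $A$ there is abelian because $A'\leqslant\overline{A'}$, and combine the $n=1$ case with $\pi(\overline{A})=\overline{\pi(A)}$ to conclude that $\overline{A}/\overline{A'}$ is abelian, i.e.\ $\overline{A}'\leqslant\overline{A'}$.

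With (3) in hand, the solvable half of (4) follows by iterating it: one shows $\overline{A}^{(i)}=\overline{A^{(i)}}$ along the derived series by induction on $i$ (the step being $\overline{A}^{(i+1)}=(\overline{A^{(i)}})'=\overline{(A^{(i)})'}=\overline{A^{(i+1)}}$), hence $\overline{A}^{(n)}=\overline{1}=1$; and the class cannot drop below $n$ since $A\leqslant\overline{A}$. For the nilpotent half I would induct on the class $n$, the case $n=1$ being the abelian case above. For $n\geqslant 2$ set $C=C_G(\overline{A})=C_G(A)$; since $\overline{A}$ centralises $C$, the product $D=\overline{A}C$ is a definable subgroup with $C\trianglelefteq D$, and for the quotient map $\pi\colon D\to D/C$ one has $\pi(\overline{A})=D/C$, so by $\pi(\overline{A})=\overline{\pi(A)}$ the definable closure in $D/C$ of the image of $A$ is all of $D/C\cong\overline{A}/Z(\overline{A})$. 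But that image is isomorphic to $A/Z(A)$, nilpotent of class $n-1$, so the induction hypothesis gives that $\overline{A}/Z(\overline{A})$ has class at most $n-1$, i.e.\ $\overline{A}$ has class at most $n$; as before the reverse inequality is clear. Finally, for (5) let $A_0\leqslant A$ be the normal core of $A$ in $B$, which has finite index in $B$; then $\overline{A_0}\trianglelefteq\overline{B}$ by (1), and for $\pi\colon\overline{B}\to\overline{B}/\overline{A_0}$ the image $\pi(B)$ is finite (being a quotient of $B/A_0$), hence definable and equal to its own definable closure, which by $\pi(\overline{B})=\overline{\pi(B)}$ equals $\overline{B}/\overline{A_0}$; so $[\overline{B}:\overline{A_0}]<\infty$, and a fortiori $[\overline{B}:\overline{A}]<\infty$.

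The step I expect to carry the most weight is the pair (3)--(4): everything there is reduced to the abelian case of (4) (cheap, via (2)) plus the one genuinely structural input that the derived subgroup of a definable group of finite Morley rank is definable --- this is where Zilber's indecomposability theorem enters. Granting that together with the two bookkeeping principles (arbitrary intersections of definable subgroups are definable; definable images and preimages are definable, so definable quotients commute with taking definable closures), the remainder is routine: (1), (2) and (5) are essentially immediate, and the other assertions are short inductions.
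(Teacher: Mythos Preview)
The paper does not prove this statement: it is quoted as a \emph{Fact} with a citation to \cite[Section 5.5]{Borovik-Nesin} and no argument is given. So there is no ``paper's own proof'' to compare against. Your sketch is a faithful outline of the standard arguments one finds in Borovik--Nesin: the descending chain condition makes $\overline{X}$, centralisers, and normalisers definable; definable closure commutes with definable automorphisms and definable quotient maps; and the one substantive structural input is the definability of the commutator subgroup of a definable group (via Zilber's indecomposability theorem), from which (3) and then (4) follow by the inductions you describe. The treatment of (5) via the normal core of $A$ in $B$ is also standard and correct.

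One small point worth tightening: when you invoke ``$\pi(\overline{A})=\overline{\pi(A)}$'' in the nilpotent step, you are implicitly using that the definable closure of $A$ computed inside the definable subgroup $D=\overline{A}\,C_G(A)$ coincides with $\overline{A}$ computed in $G$; this is true because $D$ is definable in $G$, but it is the kind of identification a careful reader will want spelled out. Otherwise the proposal is correct and matches the textbook development.
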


In a group of finite Morley rank the Sylow $2$-subgroups are conjugate and the connected component $P^\circ$ of a Sylow $2$-subgroup $P$ of $G$ is the central product $U \ast T$  where $U$ is a \emph{$2$-unipotent} group (a definable and nilpotent $2$-group of bounded exponent) and $T$ is a \emph{$2$-torus} (i.e. $T \cong (\mathbb{Z}_{2^\infty})^\ell$ for some $\ell \in \mathbb{N} \setminus \{0\}$, where $\mathbb{Z}_{2^\infty}= \{x \in \mathbb{C}^{\times} : x^{2^n}=1 \, {\rm for} \,{\rm some} \,  n \in \mathbb{N} \}$ is the \emph{Pr\"{u}fer $2$-group}) \cite[I.6]{ABC}. Groups of finite Morley rank can be split into four cases based on the structure of $P^\circ $:
\begin{enumerate}
\item \emph{Even type}: $P^\circ$ is a non-trivial $2$-unipotent group.
\item \emph{Odd type}:  $P^\circ$ is a non-trivial $2$-torus.
\item \emph{Mixed type}: $P^\circ=U \ast T$ with $U\neq 1$ and $T\neq 1$.
\item \emph{Degenerate type}: $P^\circ$ is trivial.
\end{enumerate}  If the ambient group $G$ is infinite and simple, then $G$ cannot be of mixed type \cite{ABC}. Also, in this case, either $P=1$ or $P^\circ$ is infinite \cite{Borovik-Burdges-Cherlin}.

\section{Fixed point subgroups of supertight automorphisms}\label{sec:proofs}

\subsection{Pseudofinite fixed point subgroups}\label{sec:not}Let $G$ be a connected odd type group of finite Morley rank and $\alpha$ a supertight automorphism of $G$. Assume that the following holds for all $n\in \mathbb{N}\setminus\{0\}$. \begin{enumerate}[(i)]
\item $P_n:=C_{G}(\alpha^n)$ is pseudofinite and for any non-trivial $H_n \unlhd\unlhd P_n$ we have $C_{P_n}(H_n)=1$.
\item Set $S_n:={\rm Soc}(P_n)$. Then $\overline{S}_n=G$. \end{enumerate}

By Fact~\ref{propo:pf}, $S_n$ is isomorphic to a simple (twisted) Chevalley group ${X}(F_n)$ where $F_n$ is a pseudofinite field of characteristic $\neq 2$ (as $G$ is of odd type). As $S_1\leqslant S_n$ for all $n$ (if necessary, see \cite[Remark 5.2]{Karhumaki-Ugurlu}), in our proofs, we may freely replace $\alpha$ with its power and we know that $S_n$'s are of the same Lie type. Hence, we often omit the subscripts and simply write $P$ and $S \cong X(F)$. 

Set $P\equiv \prod_{i\in I}P_i/\mathcal{U}$, where $P_i$'s are finite. By Fact~\ref{propo:pf}, we have $$S={\rm Soc}(P)\cong {X}(F) \equiv \prod_{i\in I} X(\mathbb{F}_{q_i})/\mathcal{U}=\prod_{i\in I}{\rm Soc}(P_i)/\mathcal{U}$$and hence $$P/S \equiv \prod_{i\in I} P_i /\Soc (P_i)/\mathcal{U} \hookrightarrow \prod_{i\in I} {\rm Out}(\Soc (P_i))/\mathcal{U},$$ where $q_i$ is odd, the Lie type $X$ is fixed (modulo $\mathcal{U}$) and ${\rm Out}(\Soc (P_i))$ is the group of outer automorphisms of $\Soc (P_i)$. Any element in ${\rm Out}(\Soc (P_i))$ factors uniquely into the product of a diagonal automorphism, a graph automorphism and a field automorphism, the finite groups ${\rm Diag}(X)$ and ${\rm Graph}(X)$ do not depend on $i$ and the group $\prod_{i\in I} \Aut(\mathbb{F}_{q_i})/\mathcal{U}$, being an ultraproduct of finite cyclic groups, has boundedly many elements of each finite order (and these bounds do not depend on $i$). It easily follows that $P/S$ has boundedly many elements of each finite order: Assume that $x_i \in {\rm Out}(\Soc (P_i))$ is of order $\ell$. We have $x_i=dgf_i$, where $d\in {\rm Diag}(X)$, $g\in {\rm Graph}(X)$ and $f_i \in \Aut(\mathbb{F}_{q_i})$. So it is enough to observe that there are only boundedly many choices for $f_i$. Let $T_i$ be the diagonal subgroup of $\Soc (P_i)$. Now $(dgf_i)^\ell$ and $d$ act trivially on $T_i$ and hence so does $(gf_i)^\ell$. But any graph automorphism either permutes non-trivially the ${\rm SL}_2$-rootsubgroups of $\Soc (P_i)$ corresponding to the (finite) Dynkin diagram of $\Soc (P_i)$ or is trivial (\cite{Carter1971}), and any field automorphism leaves the diagonal subgroups of the ${\rm SL}_2$-rootsubgroups of $\Soc (P_i)$ invariant. It easily follows that $(gf_i)^\ell$ acts on $T_i$ as $f_i^\ell$ so the field automorphism $f_i^\ell$ must be trivial, as we wanted. Finally note that we also have (again by Fact~\ref{propo:pf}) that $ P \leqslant {\rm Aut}(S)$. So any element of $P$ is a product $sdgf$, where $s\in S$, $d\in {\rm Diag}(X) $, $g\in {\rm Graph}(X)$ and $f\in {\rm Aut}(F)$.

\subsection{Killing the field automorphisms} In this section, we adobt the notation from Section~\ref{sec:not}. Using similar arguments as in \cite[Section 5.3]{Karhumaki-Ugurlu}, we prove that the fixed point subgroup $P_n=C_G(\alpha^n)$ is a finite extension of its socle $S_n={\rm Soc}(P_n)$. 

In what follows, we denote by $T_n$ and $U_n$ a maximal split torus and a unipotent subgroup of some fixed Borel subgroup $B_n$ of $S_n$ (Subsection~\ref{subsec:Chevalley}). Also, set $\mathcal{D}={\rm Diag}(X)$ and $\mathcal{G}={\rm Graph}(X)$. Note that the finite groups $\mathcal{D}$ and $\mathcal{G}$ do not depend on $n$ and that both $T_n$ and $U_n$ are set-wise invariant under the action of $\mathcal{D}$, $\mathcal{G}$ and ${\rm Aut}(F_n)$.

\begin{lemma}\label{lemma:intersections-P}There is $\ell \in \mathbb{N} \setminus\{0\}$ so that for each $n$ we have $[C_{P_n}(T_n):T_n]< \ell$. Also, $[\overline{T}_n\cap P_n:T_n]$ and $\overline{U}_n\cap C_{P_n}(T_n)$ are finite.
\end{lemma}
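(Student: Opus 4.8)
The plan is to treat the three bounds in order, extracting each from the structure of $P$ as an extension of the Chevalley group $S \cong X(F)$ and from the ultraproduct description in Section~\ref{sec:not}. Throughout I drop subscripts, writing $P$, $S$, $T$, $U$, $B$, and I work with $P \leqslant \operatorname{Aut}(S)$, so that every element of $P$ factors as $sdgf$ with $s\in S$, $d\in\mathcal{D}$, $g\in\mathcal{G}$, $f\in\operatorname{Aut}(F)$.

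\emph{Bound on $[C_P(T):T]$.} First I would compute $C_S(T)$: since $T$ is a maximal split torus of the (twisted) Chevalley group $S$ we have $C_S(T)=T$ (recorded in Subsection~\ref{subsec:Chevalley}). So $C_P(T)\cap S=T$ and hence $C_P(T)/T \hookrightarrow P/S$, which by the analysis in Section~\ref{sec:not} embeds into an ultraproduct of outer automorphism groups $\prod_i \operatorname{Out}(\operatorname{Soc}(P_i))/\mathcal{U}$. An element $x=sdgf$ centralising $T$ must, after absorbing the $S$-part (which then lies in $C_S(T)=T$), have its image $dgf$ in $P/S$ act trivially on $T$; but the argument already given in Section~\ref{sec:not} for controlling field automorphisms shows that such an element has field-part of bounded order, and the diagonal and graph parts range over the fixed finite groups $\mathcal{D}$ and $\mathcal{G}$. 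More directly: the subgroup of $\operatorname{Out}(S)$ acting trivially on $T$ is contained in $\mathcal{D}\,\mathcal{G}\,\{f : f \text{ trivial on } T\}$, and a nontrivial field automorphism of a pseudofinite field never fixes the split torus pointwise (it moves the torus of each $\operatorname{SL}_2$-rootsubgroup, which is $\cong F^\times$, nontrivially). Hence the relevant subgroup of $P/S$ is contained in $\mathcal{D}\mathcal{G}$, a group of size independent of $n$; take $\ell=|\mathcal{D}||\mathcal{G}|+1$.

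\emph{Finiteness of $[\overline{T}\cap P:T]$.} Here I would use that $T$, being a maximal split torus of $S\cong X(F)$ over a pseudofinite field of odd characteristic, is a divisible abelian group, and in particular is a direct sum of a $2$-torus part and odd parts; its definable closure $\overline{T}$ in the ambient group $G$ of finite Morley rank is then a definable divisible abelian group. The key point is that $\overline{T}\cap P$ is contained in $C_P(\overline{T})=C_P(T)$ by Fact~\ref{fact:def_closure}(2) (since $\overline{T}$ is abelian it centralises $T$, hence normalises, and in fact $\overline{T}\cap P\leqslant C_P(\overline{T})$ because $\overline T$ is abelian), so $[\overline{T}\cap P:T]\leqslant [C_P(T):T]<\ell$ by the first part. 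This reduces the second bound to the first, giving finiteness uniformly.

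\emph{Finiteness of $\overline{U}\cap C_P(T)$.} Let $V=\overline U\cap C_P(T)$. Since $U$ is nilpotent, $\overline U$ is nilpotent of the same class by Fact~\ref{fact:def_closure}(4); and $V$ consists of elements of $\overline U$ centralising $T$. Inside $S$ itself, $U\cap C_S(T)=1$ because $T$ acts on $U$ with no nonzero fixed points (each root subgroup $X_s\cong F^+$ is acted on by $T$ through the nontrivial character $s$, and over a field with $|F|>2$ — which holds since $F$ is pseudofinite — this action is fixed-point-free; this is where odd characteristic and infiniteness of $F$ are used). So $V\cap S=1$ and $V$ embeds into $P/S$; but $V\leqslant \overline U$ is a connected-by-finite nilpotent subgroup, hence $\overline V$ is an infinite definable nilpotent subgroup if $V$ is infinite, whereas $P/S$ has boundedly many elements of each finite order — I would instead argue that $V$ normalises $T$ (it centralises it) and $T$ normalises $U$, so $\langle T,V\rangle$ is solvable, its definable closure is solvable, and $\overline V$ centralises $\overline T\supseteq T$; then $\overline V \leqslant C_G(T)^\circ\cdot(\text{finite})$ intersected with $\overline U$, and $C_G(U)\cap \overline U\cap C_G(T)$ is forced to be finite because a connected unipotent-like subgroup centralising a maximal split torus would contradict the rigidity of the torus action. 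The cleanest route: $\overline V$ is a nilpotent subgroup of $\overline U$ centralising $\overline T$; by tightness applied to $\alpha$ and the fact that $\overline{C_S(T)}^\circ = C_G(T)^\circ$ is, by the identification $S\cong X(F)$ and $\overline S=G$, exactly the maximal torus-type subgroup of $G$, we get $\overline U\cap C_G(T)=1$, whence $V$ is finite.

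\emph{Main obstacle.} I expect the genuinely delicate point to be the last one: controlling $\overline U\cap C_P(T)$ requires knowing that passing to the definable closure does not create new elements centralising the torus inside the unipotent radical — i.e.\ that the fixed-point-freeness of the $T$-action on $U$ persists in $G$. The safe way to get this is to avoid claiming $\overline U\cap C_G(T)=1$ outright and instead only claim finiteness: $V$ embeds in $P/S$, and a nontrivial field automorphism cannot centralise $T$ (as in the first part), while $\mathcal D\mathcal G$ is finite and acts on $U$, so $V/(V\cap S)=V$ injects into the finite group $\mathcal D\mathcal G$. That finiteness is all the lemma asserts, so I would phrase the proof to use only the bound $|V|\leqslant|\mathcal D||\mathcal G|$ together with $V\cap S=1$, sidestepping any finite-Morley-rank rigidity statement for the third claim.
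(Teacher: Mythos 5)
Your treatment of the second claim ($[\overline{T}_n\cap P_n:T_n]<\infty$) is exactly the paper's: $\overline{T}_n$ is abelian by Fact~\ref{fact:def_closure}, so $\overline{T}_n\cap P_n\leqslant C_{P_n}(T_n)$ and the first claim finishes it. The first and third claims, however, both rest on a step that is not justified and is the actual crux of the lemma.

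The gap is this: if $x=sdgf$ centralises $T$, you cannot ``absorb the $S$-part into $C_S(T)=T$'' and conclude that $dgf$ acts trivially on $T$. What follows from $x\in C_P(T)$ is only that $s\in N_S(T)$ (because $d$, $g$, $f$ preserve $T$ set-wise), and $s$ may act on $T$ as a nontrivial Weyl group element whose action is cancelled by that of $gf$. So ``a nontrivial field automorphism never fixes the split torus pointwise'' does not apply, since it is the product $s\,dgf$, not $f$ alone, that fixes $T$ pointwise. Consequently your bound $\ell=|\mathcal{D}||\mathcal{G}|+1$ is not established (and is not what the paper proves). The paper's route is: write $x_n=s_ndga_n$ with $s_n\in N_{S_n}(T_n)$, use that $[N_{S_n}(T_n):T_n]$ is a constant $k$ of the Lie type, raise to a uniform power $r$ to kill the graph and Weyl contributions on a one-dimensional subtorus $J_n$, deduce that $a_n^r$ acts trivially or by inversion on $F_n$, hence $a_n^r=1$ --- and then invoke the fact from Section~\ref{sec:not} that $\prod_i\Aut(\mathbb{F}_{q_i})/\mathcal{U}$ has boundedly many elements of each finite order. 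The conclusion is that $a_n$ ranges over a bounded set, not that $a_n=1$.

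The same gap sinks your ``safe'' version of the third claim: to get the field part of an element of $\overline{U}_n\cap C_{P_n}(T_n)$ under control you must first show that its inner part actually centralises $T_n$. The paper does this by a separate argument you are missing: from $x\in\overline{U}_n$ one gets $\overline{U}_n^{s_n}=\overline{U}_n$, so $\langle U_n,U_n^{s_n}\rangle$ is nilpotent, hence lies in a Borel subgroup, forcing $s_n\in N_{B_n}(T_n)=T_n$; only then does $ga_n$ centralise $T_n$ and the field-automorphism argument apply. Separately, your identification $V\cap S=U\cap C_S(T)=1$ conflates $U$ with $\overline{U}$: a priori $V\cap S=\overline{U}\cap T$, and one still needs $\overline{U}_n\cap\overline{T}_n\cap S_n=U_n\cap T_n=1$, which is the form in which the paper concludes.
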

\begin{proof} Let $x_n\in C_{P_n}(T_n)$. Then $x_n=s_ndga_n$, where $s_n\in N_{S_n}(T_n)$, $d\in \mathcal{D}$, $g\in \mathcal{G}$ and $a_n\in {\rm Aut}(F_n)$. Since $S_n$'s are of the same Lie type, there is $k$ so that, for each $n$, we have $[N_{S_n}(T_n):T_n]=k$. So to prove that there is $\ell$ so that $[C_{P_n}(T_n):T_n]< \ell$, it is enough to show that there are only boundedly many choices for $a_n$. As $d$ centralises $T_n$ so does $s_nga_n$. So $s_nga_n$ centralises each $1$-dimensional subtorus (i.e. a diagonal subgroup of a ${\rm SL}_2$-rootsubgroup of $S_n$) of $T_n$; let $J_n$ be one such subtorus. Then either $J_n\cong F_n^\times$ or $J_n\cong (F_n^\times)^2$ (the set of squares). It easily follows (by finiteness of $\mathcal{G}$) that there is $r \in \mathbb{N}\setminus\{0\}$ (which does not depend on $n$) and $s'_n\in S_n$ so that $(s_nga_n)^r$ acts on $J_n$ as the product $s_n'a_n^r$ of an inner automorphism $s_n'$ and a field automorphism $a_n^r$. Hence $s'_n \in S_n$ normalises $J_n$, thus either centralises or inverts it. So $a_n^r$ acts either trivially or by inversion on the field $F_n$; the latter is not possible so we get $a_n^r=1$. Now, by Section~\ref{sec:not}, there are only boundedly many choices for $a_n$.

Recall that $C_{S_n}(T_n)=T_n$. Now, as $\overline{T}_n$ is abelian (Fact~\ref{fact:def_closure}), we have $\overline{T}_n \cap P_n  \leqslant C_{P_n}(T_n)$. By above, $[\overline{T}_n\cap P_n:T_n]$ is finite.

Finally, let $x\in \overline{U}_n\cap C_{P_n}(T_n)$. Then $x_n=s_ndga_n$ as above and $dga_n\in N_G(\du)$ (Fact~\ref{fact:def_closure}). So $\overline{U}_n^{s_n}=\overline{U}_n$ and hence $\langle U_n, U_n^{s_n} \rangle$ is a nilpotent (again by Fact~\ref{fact:def_closure}) subgroup of $S_n$; it easily follows that $\langle U_n, U_n^{s_n} \rangle$ is contained in some Borel subgroup of $S_n$ and hence in $B_n=U_n\rtimes T_n$. So $s_n\in N_{B_n}(T_n)=T_n$. Since $d$ also centralises $T_n$ so does $ga_n$; it easily follows (using similar arguments as above) that $a_n=1$. So $\overline{U}_n\cap C_{P_n}(T_n)$ is a finite extension of $\overline{U}_n\cap \overline{T}_n \cap S_n=U_n\cap T_n=1$. \end{proof}

\begin{propo}\label{propo:finite-index} There is $r\in \mathbb{N}\setminus \{0\}$ so that for each $n$, $[P_n:S_n] < r$.
\end{propo}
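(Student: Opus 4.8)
The plan is to derive the statement from a bound, uniform in $n$, on the orders of the field automorphisms of $S_n$ induced by elements of $P_n$, and to obtain that bound from the finite Morley rank of $G$ via the rigidity of decent tori. First I record that $P_n/S_n$ embeds into $\Out(S_n)$, each of whose elements factors uniquely as a product of a diagonal, a graph and a field automorphism; the diagonal and graph parts range over the finite groups $\mathcal{D}$ and $\mathcal{G}$, whose orders do not depend on $n$, and by Section~\ref{sec:not} the group $P_n/S_n$ has, uniformly in $n$, boundedly many elements of each fixed finite order. Since a finite group of exponent at most $e$ having at most $c$ elements of each order has at most $1+ce$ elements, and since the diagonal and graph parts contribute only a bounded factor, it suffices to find $e\in\mathbb{N}\setminus\{0\}$ such that for every $n$ each field automorphism of $S_n$ induced by an element of $P_n$ has order $<e$.

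So fix $n$ and let $x\in P_n\leqslant G$ induce on $S_n$ a field automorphism $f$. Since $T_n$ is a maximal split torus of $S_n$, every automorphism of $S_n$ maps it to another such torus, and these are all conjugate in $S_n$; hence a Frattini-type argument gives $P_n=S_n\,N_{P_n}(T_n)$, and, as multiplying $x$ by an element of $S_n$ leaves $f$ unchanged, I may assume $x\in N_{P_n}(T_n)$. Then $x$ normalises $T_n$, so after replacing $x$ by a bounded power $y:=x^{k}$, with $k$ independent of $n$, I may assume $y$ normalises a fixed $1$-dimensional subtorus $J_n\leqslant T_n$, a diagonal subgroup of an $\SL_2$-rootsubgroup, so $J_n\cong F_n^\times$ or $J_n\cong(F_n^\times)^2$. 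Exactly as in the proof of Lemma~\ref{lemma:intersections-P}, after enlarging $k$ by a further bounded factor $y$ acts on $J_n$ as $\pm$ a power of $f$; and since a field automorphism of order $\delta$ acts on $F_n^\times$ with order exactly $\delta$ (the divisibility $p^a-1\mid p^b-1\iff a\mid b$, transferred to the pseudofinite field $F_n$ as in Lemma~\ref{lemma:intersections-P}), the element $y$ acts on $J_n$ with order at least $(\operatorname{ord}f)/c_0$ for some $c_0$ independent of $n$.

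Now the geometric input. One shows, using that $J_n$ is a torus of $S_n$, that $S_n$ lies inside the finite Morley rank group $G$, and that $\overline{S_n}=G$, that the connected component $\overline{J_n}^{\,\circ}$ of the definable closure of $J_n$ is a decent torus of $G$ (a definable divisible abelian subgroup equal to the definable closure of its own torsion). Since $y\in G$ normalises $J_n$ it normalises $\overline{J_n}$ and hence $\overline{J_n}^{\,\circ}$ (Fact~\ref{fact:def_closure}), so the conjugation action of $y$ on $\overline{J_n}^{\,\circ}$, and hence on its subgroup $J_n$, has order dividing $|N_G(\overline{J_n}^{\,\circ})/C_G(\overline{J_n}^{\,\circ})|$. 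Decent tori of a group of finite Morley rank are rigid, so this quotient is finite; moreover, as maximal decent tori of $G$ are conjugate, the orders $|N_G(T)/C_G(T)|$ over all decent tori $T$ of the one fixed group $G$ are bounded by a constant $B=B(G)$. Combining with the previous paragraph, $\operatorname{ord}f\leqslant c_0B$, independently of $n$. This bounds the exponent of $P_n/S_n$ uniformly in $n$, whence $P_n/S_n$ is a finite group of bounded order (by \L o\'{s}'s Theorem), and there is $r\in\mathbb{N}\setminus\{0\}$ with $[P_n:S_n]<r$ for all $n$.

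The heart of the matter — and the step I expect to be the main obstacle — is the geometric input of the third paragraph: that $\overline{J_n}^{\,\circ}$ is genuinely a decent torus (here the structure theory of connected abelian groups of finite Morley rank and the semisimple nature of elements of $J_n$ enter, together with $\overline{S_n}=G$), and that the ``Weyl groups'' $N_G(T)/C_G(T)$ of decent tori in the single fixed finite Morley rank group $G$ are uniformly bounded. This is precisely where finite Morley rank and the tight set-up do the real work, and it is the reason the corresponding bound fails for the abstract pseudofinite group of the Remark. By contrast, the reductions of the first two paragraphs are essentially bookkeeping of the kind already performed in the proof of Lemma~\ref{lemma:intersections-P}.
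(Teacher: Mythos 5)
Your reduction in the first two paragraphs is fine and close in spirit to the paper's bookkeeping (Section~\ref{sec:not} and Lemma~\ref{lemma:intersections-P} do give boundedly many elements of each finite order and the ``bounded power acts on $J_n$ as a power of the field automorphism'' computation), but the third paragraph, which you yourself flag as the main obstacle, is a genuine gap rather than a step one ``shows''. You need two unproved claims: (i) that $\overline{J_n}^{\,\circ}$ is a decent torus of $G$, and (ii) that $|N_G(T)/C_G(T)|$ is bounded uniformly over all the tori arising this way, independently of $n$. For (i), $J_n\cong F_n^\times$ (or its squares) is merely an abelian subgroup of the abstract group $G$; its definable hull $\overline{J_n}^{\,\circ}$ is connected abelian, but nothing in the set-up gives divisibility or that it is the hull of its own torsion -- indeed $F_n^\times$ can have almost no torsion at all (compare the Example with $p\equiv 5 \pmod 8$), and in the abstract finite Morley rank setting (where $G$ is not known to be algebraic) establishing torality of such hulls is a serious problem, not a routine verification; without it there is no rigidity statement to invoke, since $N^\circ=C^\circ$ fails for general definable connected abelian subgroups. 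For (ii), conjugacy of \emph{maximal} decent tori does not bound $N_G(T)/C_G(T)$ for arbitrary decent subtori $T$ (an element normalising $T$ need not normalise a maximal torus containing it, and a fixed maximal torus has infinitely many definable subtori), and the family $\{\overline{J_n}^{\,\circ}\}_n$ is not uniformly definable, so no compactness argument gives a single constant $B(G)$.

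For comparison, the paper avoids toral rigidity altogether. Uniformity in $n$ comes from the Baldwin--Saxl finite centraliser dimension of $G$: after replacing $\alpha$ by a power one has $C_G(T_n)=C_G(T_1)$ for all $n$, and then $N_{P_n}(T_n)/C_{P_n}(T_n)$ embeds into the single finite group $N_G(\overline{T}_1)/C_G(T_1)$, with Lemma~\ref{lemma:intersections-P} handling $[C_{P_n}(T_n):T_n]$. Finiteness of each $P_n/S_n$ is then obtained not by bounding orders of field automorphisms directly, but by using Fact~\ref{propo:pf}(b) (abelian-by-finiteness of $P/S$, which your argument never uses): taking definable closures one gets $\overline{N^\circ_P(T)}'\leqslant \overline{T}^\circ$ and $\overline{N^\circ_P(T)}'\leqslant\overline{N^\circ_P(U)}'$ with $\overline{N^\circ_P(U)}'\cap S=U^\circ$, so the connected group $\overline{N^\circ_P(T)}'$ would meet $S$ inside $T\cap U^\circ=1$ and must be trivial; hence $O^\circ$ centralises $T^\circ$, and field automorphisms are killed by the finite-index fixed-field (coset) argument. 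If you want to salvage your route, you would have to supply proofs of (i) and (ii), which is essentially a harder problem than the proposition itself.
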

\begin{proof}Assume first that for each $n$ there is $r_n$ so that $[P_n:S_n] <r_n$. We  prove that this implies that there is $r\in \mathbb{N} \setminus\{0\}$ so that for each $n$ we have $[P_n:S_n] < r$. We know that $P_n/S_n \cong N_{P_n}(T_n)/N_{S_n}(T_n)$ and that $[N_{S_n}(T_n):T_n]$ is bounded independently from $n$. By Lemma~\ref{lemma:intersections-P} it is enough to prove that there is $k\in \mathbb{N}\setminus \{0\}$ so that $[N_{P_n}(T_n):C_{P_n}(T_n)] < k$. Since $G$ is of finite centraliser dimension, by abelianity of $T_n$'s, there is $m\in \mathbb{N}\setminus \{0\}$ so that $C_G(T_m)=C_G(T_\ell)$ for any $\ell \geqslant m$. So after replacing $\alpha$ with a suitable power, we have $C_G(T_n)=C_G(T_1)$ for each $n$. Now $N_{P_n}(T_n)\leqslant N_G(\overline{T}_n)$ normalises $C_G(T_1)=C_G(\overline{T}_n)$ (Fact~\ref{fact:def_closure}) and we have $$N_G(\overline{T}_1)/C_G(T_1)\geqslant  N_{P_n}(T_n) C_G(T_1)/C_G(T_1) \cong N_{P_n}(T_n)/ C_{P_n}(T_n).$$ Since $N_{P_n}(T_n)/N_{S_n}(T_n)$ is finite (by assumption) and $\overline{T}_n\cap S_n =T_n$, we get $N^\circ_G(\overline{T}_n)=\overline{N^\circ_{S_n}(\overline{T}_n)}=\overline{N^\circ_{S_n}(T_n)}=\overline{T}_n^\circ$. So we may set $k=[N_G(\overline{T}_1):C_G(T_1)]$.

Now, we need to prove that $P_n/S_n$ is finite for each $n$. For the rest of the proof, we may omit the subscripts and simply write $P$ and $S\cong X(F)$. We have $B=U \rtimes T$ as above (a Borel subgroup of $S$). Set $O={\rm Out}(S) \cap P$. We need to prove that $O$ is finite. By Lemma~\ref{lemma:intersections-P}, it is enough to prove that all but finitely many elements of $O^\circ$ centralises $T$.

We first prove that $O^\circ$ centralises $T^\circ$. We know that $P/S\cong N_P(U)/N_S(U)=N_P(U)/B\cong N_P(T)/N_S(T)$ is abelian-by-finite (Fact~\ref{propo:pf}). So there is a finite extension $L$ of $N_S(T)$ (and thus of $T$) so that $N_P(T)'\leqslant L$. Taking connected components and definable closures, we get $\overline{N^\circ_P(T)}'\leqslant \overline{L}^\circ=\overline{N^\circ_S(T)}=\overline{T}^\circ$ (Fact~\ref{fact:def_closure}). Similarly, there is a finite extension $H$ of $B$ so that $N_P(U)'\leqslant H$ and $\overline{N^\circ_P(U)}' \unlhd \overline{B}^\circ$. Therefore $\overline{N^\circ_P(U)}$ is solvable (Fact~\ref{fact:def_closure}) and hence $\overline{N^\circ_P(U)}'$ is nilpotent by \cite{Nesin1989}. So the nilpotent normal subgroup $\overline{N^\circ_P(U)}'\cap S$ of $\overline{B}^\circ \cap S=B^\circ$ is contained in $U^\circ$ and thus equal to $(B^\circ)'=U^\circ$. The connected $\alpha$-invariant group $N=\overline{N^\circ_P(T)}'$ is either trivial or infinite. Now, if $\overline{N\cap P}=N$ is infinite, then so is $N\cap P$ and further, by Lemma~\ref{lemma:intersections-P}, so is $N\cap S\leqslant \overline{T}\cap S =T$. As $\overline{N^\circ_P(U)}=\overline{BO}^\circ$, we have $N \leqslant \overline{N^\circ_P(U)}'$. So, if $N$ is infinite, then $N \cap S  \leqslant T$ is an infinite subgroup of $\overline{N^\circ_P(U)}' \cap S = U^\circ$; a contradiction. Therefore $N$ is trivial and $O^\circ$ centralises $T^\circ$.

We are ready to conclude that $O$ is finite. Let $x\in O^\circ$. Then $x$ centralises $T^\circ$ and we have $x=dga$ where $d\in \mathcal{D}$, $g\in \mathcal{G}$ and $a\in {\rm Aut}(F)$. Let $J$ be a $1$-dimensional subtorus of $T$, so either $J\cong F^\times$ or $J\cong (F^\times)^2$. As $\mathcal{G}$ is finite, for all but finitely many choices of $a$, the automorphism $a$ acts trivially on $T^\circ$ (Section~\ref{sec:not}) and thus on $J^\circ=J\cap T^\circ$. We just need to observe that if $a' \in {\rm Aut}(F)$ acts trivially on $J^\circ$ then it acts trivially on $F^\times$, and is therefore trivial. Set $K:=\mathrm{Fix}_{F}(a')$. Since $[J^\circ:F^\times] < \infty$, we have $[K^\times:F^\times]=n$ and $F^\times=\bigcup_{j=1}^n x_jK^\times$ for some $x_j\in F$ and $n\geq 1$. If $n>1$, then there are distinct $s,t\in K^\times$ and $j\leq n$ so that $x_1+tx_2,x_1+sx_2\in
x_jK^\times$. But then the difference of $x_1+tx_2$ and $x_1+sx_2$ falls into two distinct cosets of $K^\times$ which cannot happen. So, by Section~\ref{sec:not}, there are only finitely many choices for $a$ and hence $O$ is finite. \end{proof}

\subsection{Growing the Sylow $2$-subgroups}We now briefly discuss the Sylow $2$-subgroups of the fixed point sugbroups $C_G(\alpha^n)$ when $\alpha$ and $G$ are concrete examples of a supertight automorphism and a simple group of finite Morley rank. The example of a supertight automorphism is the \emph{non-standard Frobenius automorphism}: Let $G$ be a simple Chevalley group, recognised as the group of $K$-rational points of an algebraically closed field $K :=\prod_{p_i \in P} \mathbb{F}^{alg}_{p_i}/\mathcal{U}$. Then the non-standard Frobenius automorphism $\phi_{\mathcal{U}}$ of $K$ induces on $G$ an automorphism. Here $P$ is an infinite set of prime numbers, $\mathcal{U}$ is a non-principal ultrafilter on $P$ and $\phi_{\mathcal{U}}$ is the map from $K$ to $K$ sending an element $[x_i]_{\mathcal{U}}$ to the element $[x_i^{p_i}]_{\mathcal{U}}$. It is clear that, for each $n\in \mathbb{N}\setminus \{0\}$, $\phi_{\mathcal{U}}^n$ is a supertight automorphism of $G$ with pseudofinite fixed point subgroup $G({\rm Fix}_{\phi_{\mathcal{U}}^n}(K))$. 

Below we prove that the Sylow $2$-subgroups of the fixed points of \emph{large powers} of the non-standard Frobenius automorphism $\phi_{\mathcal{U}}$ are big. However, it is worth noting that the fixed point subgroup of $\phi_{\mathcal{U}}$ may have very small Sylow $2$-subgroups:

\begin{example}Let $\Pi:=\{p \,\, {\rm is } \,\,  {\rm a}\,\,  {\rm prime }   : p \equiv 5({\rm mod}  \, 8)\}$, $K:=\prod_{p_i \in \Pi} \mathbb{F}^{alg}_{p_i}/\mathcal{U}$, $G:={\rm PGL}_2(K)$ and $\phi_{\mathcal{U}}$ the non-standard Frobenius automorphism of $K$ inducing on $G$ a supertight automorphism. Denote $F:={\rm Fix}_{\phi_{\mathcal{U}}}(K)$. Then $C_G(\phi_{\mathcal{U}})={\rm PGL}_2(F) \equiv \prod_{p_i\in \Pi} {\rm PGL}_2(\mathbb{F}_{p_i})/\mathcal{U}$ and, by the choice of $\Pi$, the Sylow $2$-subgroups of ${\rm PGL}_2(F)$ are Klein $4$-groups. \end{example}

Below we again adobt the notation from Section~\ref{sec:not}. 

\begin{propo}\label{lemma:inf-sylow} There is no $m \in \mathbb{N}\setminus \{0\}$ so that for each $n$ the sizes of the Sylow $2$-subgroups of $S_n$ are at most $m$. \end{propo}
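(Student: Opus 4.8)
The plan is to locate, inside the fixed point subgroups of large $2$-power powers of $\alpha$, finite $2$-subgroups of unbounded order; by Proposition~\ref{propo:finite-index} these pass (up to the uniform index bound $r$) to $2$-subgroups of the $S_n$ of unbounded order, which is exactly the assertion. Throughout I would freely replace $\alpha$ by a power, as this preserves supertightness and all the standing hypotheses of Section~\ref{sec:not}, and I would use that, $G$ being connected of odd type, a Sylow $2$-subgroup of $G$ has infinite connected component, a $2$-torus $\mathbb{T}\cong(\mathbb{Z}_{2^\infty})^\ell$ with $\ell\geq 1$.

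First I would secure an $\alpha$-invariant nontrivial $2$-torus $\mathbb{T}$ of $G$. The Sylow $2$-subgroups of $G$ are conjugate, hence so are their connected components, the maximal $2$-tori; since $\alpha$ permutes them (and $\overline{C_G(\alpha^n)}=G$ by tightness), a Steinberg/torality-type argument for the tight automorphism $\alpha$ — resting on the conjugacy and rigidity of (decent) tori in groups of finite Morley rank — should produce one that is $\alpha$-invariant, possibly after passing to a power of $\alpha$. This is the step I expect to be the main obstacle: everything else is Proposition~\ref{propo:finite-index} together with elementary $2$-adic arithmetic. If no sufficiently clean invariance statement is available off the shelf, the fallback is to argue only that $\bigcup_n C_G(\alpha^n)$ meets $\mathbb{T}$ in an infinite subgroup, which serves the same purpose below.

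Granting an $\alpha$-invariant $\mathbb{T}\cong(\mathbb{Z}_{2^\infty})^\ell$, the automorphism $\alpha$ acts on it through an element $M$ of ${\rm Aut}(\mathbb{T})\cong {\rm GL}_\ell(\mathbb{Z}_2)$, where $\mathbb{Z}_2$ denotes the $2$-adic integers. Reducing $M$ modulo $2$ lands in the finite group ${\rm GL}_\ell(\mathbb{F}_2)$, so after replacing $\alpha$ by $\alpha^t$ with $t=|{\rm GL}_\ell(\mathbb{F}_2)|$ I may assume $M\equiv I\pmod 2$, say $M=I+2N$ with $N\in {\rm M}_\ell(\mathbb{Z}_2)$. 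An immediate induction gives $M^{2^k}\equiv I\pmod{2^{k+1}}$ for every $k$, hence $\alpha^{2^k}$ fixes pointwise the subgroup $\mathbb{T}[2^{k+1}]$ of elements killed by $2^{k+1}$; therefore $\mathbb{T}[2^{k+1}]\leq C_G(\alpha^{2^k})=P_{2^k}$ is a $2$-subgroup of order $2^{(k+1)\ell}$.

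Finally I would apply Proposition~\ref{propo:finite-index}: with $r$ as there, $\mathbb{T}[2^{k+1}]\cap S_{2^k}$ has order at least $2^{(k+1)\ell}/[P_{2^k}:S_{2^k}]>2^{(k+1)\ell}/r$, so $S_{2^k}$ contains a $2$-subgroup — hence has Sylow $2$-subgroups — of order exceeding $2^{(k+1)\ell}/r$. Since $\ell\geq 1$, letting $k\to\infty$ shows that no single $m$ can bound the sizes of the Sylow $2$-subgroups of all the $S_n$, which is the claim. The degenerate case where some power of $\alpha$ already centralises $\mathbb{T}$ is even easier: then the infinite $2$-group $\mathbb{T}$ lies in a single $P_d$, and the same index estimate applied to $S_d$ already gives arbitrarily large $2$-subgroups there.
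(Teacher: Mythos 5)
Your endgame is fine, but the whole weight of the proposition sits on your first step, and that step is exactly what you do not prove. In the finite Morley rank setting there is no Lang--Steinberg theorem, so ``$\alpha$ permutes the (conjugate) maximal $2$-tori, hence by a Steinberg/torality-type argument some torus is $\alpha$-invariant after passing to a power'' is not an available off-the-shelf fact; producing an $\alpha$-invariant nontrivial $2$-torus (or, in your fallback formulation, showing that $\bigcup_n C_G(\alpha^n)$ meets a $2$-torus in an infinite subgroup) is essentially the content of the proposition, not a preliminary to it. Once such a torus is granted, your $2$-adic computation ($M\equiv I \pmod 2$, hence $M^{2^k}\equiv I\pmod{2^{k+1}}$, so the finite layers $\mathbb{T}[2^{k+1}]$ land in $P_{2^k}$) and the transfer to $S_n$ via the index bound of Proposition~\ref{propo:finite-index} are correct and in fact mirror the final paragraph of the paper's proof, where the finite $\alpha$-invariant layers of a Pr\"ufer $2$-group are fixed pointwise by suitable powers of $\alpha$.

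The paper gets to that point by a quite different and more delicate route, which also shows why a single ``invariant torus'' track cannot suffice as stated. It starts from a finite Sylow $2$-subgroup $\Theta$ of $S_1$, which is automatically fixed pointwise by $\alpha$ (being inside $C_G(\alpha)$), and controls $N_{S}(\Theta)$ via Kondrat'ev's theorem (Theorem~\ref{th:normaliser-Sylow}, CFSG-based) together with Fact~\ref{fact:def_closure} to conclude that $N^\circ_G(\Theta)=C^\circ_G(\Theta)$ is abelian. Two cases then arise. If $N^\circ_G(\Theta)$ is infinite, a torsion-lifting argument shows its unique Sylow $2$-subgroup is a nontrivial $\alpha$-invariant copy of $(\mathbb{Z}_{2^\infty})^k$ --- this is how the paper manufactures the invariant $2$-torus you assumed, and note it requires both Kondrat'ev's theorem and Proposition~\ref{propo:finite-index}, not just conjugacy/rigidity of tori. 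If instead $N^\circ_G(\Theta)=1$, no invariant torus appears at all: the paper then iterates the normaliser condition inside a Sylow $2$-subgroup of $G$, using that each successive finite normaliser is fixed pointwise by a power of $\alpha$, to grow finite $2$-subgroups of the $C_G(\alpha^n)$ past any bound. Your proposal has no counterpart to this second case, so as written it has a genuine gap precisely at its acknowledged ``main obstacle''.
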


\begin{proof} If $S_1$ (the socle of $P_1=C_G(\alpha)$) has an infinite Sylow $2$-subgroup, then we are done. So we may assume that $S_1$ has a finite Sylow $2$-subgroup, say $\Theta$. By Proposition~\ref{propo:finite-index} it is enough to prove that there is no $m \in \mathbb{N}\setminus \{0\}$ so that for each $n$ the sizes of the Sylow $2$-subgroups of the fixed point subgroups $P_n$ are at most $m$. Note that if the pseudofinite group $P_n$ has a finite Sylow $2$-subgroup then its Sylow $2$-subgroups are conjugate by \L os's Theorem (see e.g. \cite{Ugurlu2017}). So, towards a contradiction, assume that there is $m\in \mathbb{N} \setminus\{0\}$ so that for each $n$ any (equiv. some) Sylow $2$-subgroup of $P_n$ has size at most $m$. We may now omit subscripts and simply write $P$ and $S$.

We need the following result (which uses CFSG).

\begin{theorem}[Kondrat'ev \cite{Kondratev}]\label{th:normaliser-Sylow}Let $H=X(\mathbb{F}_q)$ be a finite simple group of Lie type with ${\rm char}(\mathbb{F}_q) \neq 2$ and let $T$ be a Sylow $2$-subgroup of $H$. If $N_H(T) \neq T$ then one of the following holds. \begin{enumerate}\item $|N_H(T)|=d$, where $d$ depends on the Lie type $X$ of $H$ only. \item $|N_H(T)/T|=d'$, where $d'$ depends on the Lie type $X$ of $H$ only. \item $N_H(T)=T \times C_1 \times  \cdots \times C_{m}$ where $m$ depends on the Lie rank of $H$ only and the orders of the cyclic groups $C_j$ depends on $q$ for all $j\in \{1, \ldots, m\}$.\end{enumerate} \end{theorem} 

Since the Sylow $2$-subgroup $\Theta$ of $S$ is finite and $S\equiv \prod_{i\in I} X(\mathbb{F}_{q_i}),$ Theorem~\ref{th:normaliser-Sylow} tells us that $N_{S}(\Theta)$ is either finite or infinite and abelian-by-finite. Moreover, as $\Theta$ is finite we have $N^\circ_G(\Theta)=C^\circ_G(\Theta)$ (\cite[Lemma 5.9]{Borovik-Nesin}) and, by Proposition~\ref{propo:finite-index}, $C^\circ_P(\Theta)=C^\circ_S(\Theta)$. So $\overline{N^\circ_{S}(\Theta)}=\overline{C^\circ_P(\Theta)}=C^\circ_G(\Theta)=N^\circ_G(\Theta)$ is abelian (possibly trivial) by Fact~\ref{fact:def_closure}.

Now, set $N^\circ=N^\circ_G(\Theta)$.  If $N^\circ=1$ then the finite and set-wise $\alpha$-invariant group $N$ is point-wise fixed by some power of $\alpha$; we may replace $\alpha$ by this power. Let $\Sigma$ be a Sylow $2$-subgroup of $G$ containing $\Theta$. Since $\Sigma$ satisfies the normaliser condition (\cite[Corollary 6.22]{Borovik-Nesin}), we have $\Sigma_1:=N_\Sigma(\Theta)> \Theta$. Note that $\Sigma_1$ is contained in a Sylow $2$-subgroup of $C_G(\alpha)$. Now $N^\circ_G(\Sigma_1)=C^\circ_G(\Sigma_1) \leqslant C^\circ_G(\Theta)= N^\circ$ and hence $N_{\Sigma}(\Sigma_1)$ is finite and point-wise fixed by some power of $\alpha$. After replacing $\alpha$ with a suitable power, $\Sigma_2 :=N_{\Sigma}(\Sigma_1)$ is contained in a Sylow $2$-subgroup of $C_G(\alpha)$ with $\Sigma_2> \Sigma_1$ (we apply the normaliser condition again). By repeating this procedure we get a Sylow $2$-subgroup of $C_G(\alpha)$ of size $> m$; a contradiction.

So we may assume that $N^\circ$ is infinite. Now, since $G$ is of odd type, the abelian group $N^\circ$ has a unique Sylow $2$-subgroup, which is either trivial or isomorphic to $(\mathbb{Z}_{2^\infty})^k$ for some $k\in \mathbb{N}\setminus \{0\}$ (\cite[Theorem 9.29]{Borovik-Nesin}). The former cannot happen: If $N^\circ$ has no $2$-elements then there is an involution $i$ in $ \Theta\cap( N \setminus N^\circ)$ and, by torsion-lifting (\cite[Section 5.5, Ex. 11]{Borovik-Nesin}), there is a $2$-element $in$ in the coset $ i N^\circ$. But then, as $n$ centralises $i$, $n$ is a $2$-element in the group $N^\circ$. So the unique Sylow $2$-subgroup of $N^\circ$, say $\Pi$, is isomorphic to $(\mathbb{Z}_{2^\infty})^k$. As $N^\circ$ is set-wise $\alpha$-invariant, so is $\Pi$. This means that for each $s\in \mathbb{N}\setminus\{0\}$, all elements of $\Pi$ of order $2^s$ are fixed by some power of $\alpha$ and therefore, after replacing $\alpha$ with a suitable power, $C_G(\alpha)$ has a Sylow $2$-subgroup of size $> m$. This contradiction proves the proposition.
\end{proof}


\subsection{Proof of the main theorem}We are ready to prove Theorem~\ref{th:sylows} as an easy corollary of Proposition~\ref{propo:finite-index} and Proposition~\ref{lemma:inf-sylow}.
\begin{proof}[Proof of Theorem~\ref{th:sylows}]Let $G$ be an infinite simple group of finite Morley rank with a supertight automorphism $\alpha$ whose fixed point subgroup $P_n:=C_G(\alpha^n)$ is pseudofinite for all $n\in \mathbb{N}\setminus \{0\}$. Let $1 < H_n$ be a subnormal subgroup of $P_n$, say $H_n = H_{n_1} \triangleleft \dots \triangleleft H_{n_m} = P_n$. Since $G$ is simple and $\alpha$ is tight, $\overline{H}_{n_m} = G$. Inductively, $\overline{H}_n = G$; in particular $C_G(H_n) = Z(G) = 1$, so $C_{P_n}(H_n) = 1$ and ${\rm Soc}(P_n)$ is isomorphic to a simple pseudofinite group $X(F_n)$ by Fact~\ref{propo:pf}. So $G$ contains involutions and is therefore either of even or of odd type.

If $G$ is of odd type then Theorem~\ref{th:sylows} holds by Proposition~\ref{propo:finite-index} and Proposition~\ref{lemma:inf-sylow}. If $G$ is of even type then it is isomorphic to a Chevalley group over an algebraically closed field $K$ of characteristic $2$ by \cite{ABC}. Theorem~\ref{th:sylows} easily follows (a rootsubgroup of ${\rm Soc}(P_1)$ is an infinite elementary abelian $2$-group and any field automorphism of ${\rm Soc}(P_n)$ in $P_n$ would extend to a field automorphism of $K$ in $G$).\end{proof}

\section{The role of CFSG in the supertight programme}\label{sec:CFSG}

We now discuss the role of the classification of finite simple groups (CFSG) in the supertight programme. Note first that one of the main goals of this programme is to prove the following (for more detail, see \cite[Introduction]{Karhumaki-Ugurlu}):

\begin{conjecture}\label{conj:tight}An infinite simple odd type group of finite Morley rank $G$ with a supertight automorphism $\alpha$ whose fixed point subgroup $C_G(\alpha^n)$ is pseudofinite for each $n\in \mathbb{N}\setminus \{0\}$ is isomorphic to a Chevalley group $X(K)$ over an algebraically closed field $K$ of characteristic $\neq 2$.\end{conjecture}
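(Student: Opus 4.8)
The plan is to reduce Conjecture~\ref{conj:tight} to a uniform local analysis around the ${\rm SL}_2$-rootsubgroups of the socles, using Theorem~\ref{th:sylows} and Proposition~\ref{th:CFSG} to keep the error terms under control. First I would fix a power of $\alpha$ so that, by Fact~\ref{propo:pf} and Proposition~\ref{propo:finite-index}, every $S_n\cong X(F_n)$ is a (twisted) Chevalley group of one fixed Lie type $X$ over a pseudofinite field $F_n$ of characteristic $\neq 2$, with $[P_n:S_n]<r$ uniformly in $n$. The expected answer is $X(K)$ for the algebraically closed field $K$ built from the $F_n$: writing $F_1\equiv\prod_i\mathbb{F}_{q_i}/\mathcal{U}$ with $q_i$ a power of the prime $p_i$, one takes $K:=\prod_i\mathbb{F}^{alg}_{p_i}/\mathcal{U}$. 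Checking that $X(K)$ is itself a group of finite Morley rank carrying, via the non-standard Frobenius automorphism, a supertight automorphism whose $n$-th fixed point subgroup interprets $X(F_n)$ up to a bounded finite extension is the ``existence'' half, and is essentially bookkeeping.

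The substantive half is uniqueness, that is, showing $G\cong X(K)$. I would argue root by root. For a root $s$ of $X$ the ${\rm SL}_2$-rootsubgroup $R_s(F_n)=\langle X_s, X_{-s}\rangle\leqslant S_n$ is $\alpha^n$-invariant, so its definable closure $\overline{R_s(F_n)}$ is a definable connected $\alpha^n$-invariant subgroup of $G$ of Pr\"{u}fer $2$-rank $1$; applying \cite[Theorem 1.2]{Karhumaki-Ugurlu} --- CFSG-free by Proposition~\ref{th:CFSG} --- to the relevant $\alpha$-invariant simple section, one identifies $\overline{R_s(F_n)}$, up to a central extension, with a rank-one Chevalley group over $K$. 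Its root subgroups $\overline{X_s(F_n)}\cong K^{+}$ are then the unipotent radicals of its Borel subgroups, and $\alpha$ permutes the family $\{\overline{X_s(F_n)}\}_s$ according to the Dynkin diagram of $X$. One then has to check that these definable subgroups of $G$ satisfy the Chevalley commutator relations --- which, by Fact~\ref{fact:def_closure}, can be transported from the relations already holding among the $X_s(F_n)$ inside $S_n$ --- and that together they generate $G$; for the latter one invokes tightness ($\overline{S_n}=G$) together with the uniform bound $[P_n:S_n]<r$, so that $P_n$ contributes nothing beyond $S_n$ up to finite index. Steinberg's presentation then exhibits $G$ as a central quotient of the universal Chevalley group of type $X$ over $K$, whence $G\cong X(K)$ by simplicity.

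The hard part will be the gluing step: controlling the family $\{\overline{R_s(F_n)}\}_s$ and their relative position in $G$ without presupposing the conclusion, and in particular ruling out that some $\overline{R_s(F_n)}$, or $G$ itself, is a bad group or of degenerate type. Theorem~\ref{th:sylows}(b) is designed precisely for this: it forbids the Sylow $2$-subgroups of the $S_n$ from staying bounded, which forces the pertinent $2$-tori of $G$ to be non-trivial and so brings the odd-type structure theory into play. A second difficulty is coherence in $n$ --- one needs the identifications of the $\overline{R_s(F_n)}$ and the ambient root datum to be compatible as $n$ grows --- which should follow from clause (2) of the definition of a supertight automorphism ($C_G(\alpha^m)<C_G(\alpha^n)$ when $m\mid n$) together with Proposition~\ref{propo:finite-index}, but making this precise for twisted types, where the field and graph automorphisms of the $S_n$ interact, is the delicate point.
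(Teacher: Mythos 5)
You should first be aware that the statement you are ``proving'' is Conjecture~\ref{conj:tight}: it is an open problem, stated in the paper as the main goal of the supertight programme, and the paper offers no proof of it. The paper only establishes partial results in its direction (Theorem~\ref{th:sylows}, i.e.\ Propositions~\ref{propo:finite-index} and~\ref{lemma:inf-sylow}, and the CFSG-removal statement Proposition~\ref{th:CFSG}), and even the Pr\"ufer $2$-rank $1$ case, \cite[Theorem 1.2]{Karhumaki-Ugurlu}, required a separate substantial paper. So your text cannot be compared with a proof in the paper; it can only be judged as a strategy sketch, and as such it has genuine gaps rather than being a proof.

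The central gap is the step where you apply \cite[Theorem 1.2]{Karhumaki-Ugurlu} to $\overline{R_s(F_n)}$. That theorem requires an \emph{infinite simple} odd type group of finite Morley rank of Pr\"ufer $2$-rank $1$ equipped with a \emph{supertight} automorphism whose fixed point subgroups are \emph{pseudofinite}; none of these hypotheses is verified for $\overline{R_s(F_n)}$ (or for a ``relevant simple section'' of it). There is no reason offered why the definable closure of a rank-one rootsubgroup should be simple modulo centre, why ${\rm pr}_2(\overline{R_s(F_n)})=1$ (Pr\"ufer rank is not controlled by passing to definable closures), why the restriction of $\alpha$ remains supertight on it (tightness is a condition on \emph{all} connected definable invariant subgroups of the closure, and condition (2) of the definition must also be rechecked), or why $C_{\overline{R_s}}(\alpha^n)$ is pseudofinite --- it need not even equal $\overline{R_s}\cap P_n$ in any useful way. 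Similarly, the assertion that $\overline{X_s(F_n)}\cong K^{+}$ and that the Chevalley commutator relations ``transport'' to the closures via Fact~\ref{fact:def_closure} is not something that fact gives: it preserves normalisation, solvability/nilpotency class and commutator subgroups, but it does not identify the field structure on $\overline{X_s}$, does not yield a $K$-parametrisation of the closures compatible across roots, and does not produce the structure constants needed for Steinberg's presentation. Recovering exactly this kind of algebraic structure on definable closures of the $S_n$-data (and excluding degenerate or pathological configurations along the way) is the open content of the conjecture, so your sketch assumes what is to be proved at the decisive points; the remaining parts (construction of $K$, the ``existence half'', the appeal to Theorem~\ref{th:sylows}(b) to keep $2$-tori nontrivial) are reasonable but peripheral.
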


Let $G$ and $\alpha$ be as in Conjecture~\ref{conj:tight}. By Fact~\ref{propo:pf}, the socle $S_n={\rm Soc}(C_G(\alpha^n))$ is a (twisted) Chevalley group and hence has a \emph{${\rm BN}$-pair} (\cite{Carter1971}). The natural strategy for the identification of $G$ is to construct a ${\rm BN}$-pair for $G$ using the one of $S_n$ (for any fixed $n$). However, the proof that $S_n$ is a (twisted) Chevalley group heavily relies on Wilson's classification of simple pseudofinite groups (\cite{Wilson1995}) which in turn heavily relies on CFSG. It seems to the author that removing CFSG from Fact~\ref{propo:pf} in general is an extremely hard task (few more words about this is said in the very end of the paper). However, if $G$ is `small' then this can be easily done (and further, the subnormality assumption can be relaxed). This is explained below.

Consider a Chevalley group $H=X(K)$ over an algebraically closed field $K$. The maximal (finite) number of copies of the Pr\"{u}fer $2$-group $\mathbb{Z}_{2^\infty}$ in a Sylow $2$-subgroup of $H$ is called the \emph{Pr\"{u}fer $2$-rank} of $H$ and it is denoted by ${\rm pr}_2(H)$. One may measure the `size' of $H$ by its Pr\"{u}fer $2$-rank. The only simple Chevalley group over $K$ of ${\rm pr}_2(H)=1$ is ${\rm PSL}_2(K)={\rm PGL}_2(K)$.

Let then $H$ be a finite group. The \emph{rank} of $H$ is the smallest cardinality of its generating set and the \emph{$2$-rank} of $H$, denoted by $m_2(H)$, is the largest rank of its elementary abelian $2$-subgroup. The following result is a crucial part of CFSG, and does not itself use CFSG.

\begin{theorem}[See e.g. {\cite[page 6]{Gorenstein1983}}]\label{th:finite2-rank2} Let $H$ be a finite simple group with $m_2(H) \leqslant 2$. Then $H$ is isomorphic to one of the following groups $${\rm PSL}_2(q), {\rm PSL}_3(q), {\rm PSU}_3(q) \,\, {\rm for} \,\, {\rm odd} \,\, q, \,\, {\rm PSU}_3(4), A_7 \,\, {\rm or} \,\,  M_{11}.$$ \end{theorem}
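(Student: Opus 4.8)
The plan is to reduce the statement to the classical recognition theorems that identify a finite simple group from the isomorphism type of its Sylow $2$-subgroup, and then to run through the short list of $2$-groups of $2$-rank at most $2$. Since a nonabelian finite simple group has even order, I may assume $m_2(H)\geqslant 1$. First I would dispose of the case $m_2(H)=1$: then a Sylow $2$-subgroup $S$ of $H$ has a unique involution, so $S$ is cyclic or generalised quaternion; if $S$ is cyclic a standard transfer argument gives a normal $2$-complement in $H$, and if $S$ is generalised quaternion the Brauer--Suzuki theorem shows $H/O(H)$ has a centre of order $2$ --- either way $H$ is not simple. So from now on $m_2(H)=2$.

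Next I would fix $S\in\mathrm{Syl}_2(H)$, a $2$-group of $2$-rank exactly $2$, and argue by cases on its isomorphism type, using the classification of $2$-groups of $2$-rank $2$: the possibilities that survive are that $S$ is abelian of rank $2$, dihedral $D_{2^n}$ ($n\geqslant 2$), semidihedral $\mathit{SD}_{2^n}$, wreathed $\mathbb{Z}_{2^n}\wr\mathbb{Z}_2$, or one of a short explicit list of further $2$-groups (including the special group of order $64$ that occurs in $\mathrm{PSU}_3(4)$). For $S$ dihedral --- in particular Klein four --- the Gorenstein--Walter theorem gives $H\cong\mathrm{PSL}_2(q)$ for odd $q$ or $H\cong A_7$. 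For $S$ abelian of rank $2$, Walter's classification of finite simple groups with abelian Sylow $2$-subgroups, cut down by $m_2(H)=2$ (which removes $J_1$ and the Ree groups ${}^2G_2(q)$, whose Sylow $2$-subgroups are elementary abelian of rank $3$), leaves only the Klein four case already handled. For $S$ semidihedral or wreathed, the Alperin--Brauer--Gorenstein theorem gives $H\cong\mathrm{PSL}_3(q)$ or $H\cong\mathrm{PSU}_3(q)$ for odd $q$, or $H\cong M_{11}$. For the finitely many remaining types of $S$, a direct fusion-and-transfer analysis eliminates all but one configuration and yields $H\cong\mathrm{PSU}_3(4)$; the same characteristic-$2$ analysis also recovers $\mathrm{PSL}_2(4)\cong\mathrm{PSL}_2(5)$ and $\mathrm{PSL}_3(2)\cong\mathrm{PSL}_2(7)$, which are already present in the $\mathrm{PSL}_2(q)$ family. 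Collecting these conclusions produces exactly the list in the statement (a consolidated account is \cite{Gorenstein1983} and the references cited there).

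The hard part is the case analysis above: the theorems it invokes --- above all Gorenstein--Walter for the dihedral case and Alperin--Brauer--Gorenstein for the semidihedral and wreathed cases --- are themselves long and delicate, resting on Brauer's block and modular-character theory, Glauberman's $Z^{*}$-theorem, detailed fusion analysis, and recognition criteria for $\mathrm{PSL}_2$, $\mathrm{PSL}_3$ and $\mathrm{PSU}_3$ from an involution centraliser. Of the elementary cases, the leftover characteristic-$2$ configuration giving $\mathrm{PSU}_3(4)$ is the most awkward. By comparison the reduction to $m_2(H)=2$ and the bookkeeping with $2$-groups are routine, so for our purposes --- where the statement is only used as an input --- appealing to this body of classical work is the natural route.
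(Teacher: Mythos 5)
Your outline is correct and is essentially the same route the paper takes: the paper offers no proof of this statement, treating it as a known pre-CFSG classification result and citing Gorenstein's book, and your sketch (Brauer--Suzuki and transfer for $2$-rank one, then Gorenstein--Walter for dihedral, Walter for abelian, Alperin--Brauer--Gorenstein for semidihedral and wreathed, plus the residual ${\rm PSU}_3(4)$ configuration) is exactly the classical body of work that citation points to. No gap to report, provided one reads ``simple'' as nonabelian simple, as both you and the paper implicitly do.
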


The following easy observation proves that if ${\rm pr}_2(G)=1$ then the identification of $G$ (which is done in \cite{Karhumaki-Ugurlu}) does not require CFSG. 

\begin{propo}\label{propo:pf-noCFSG}Let $P$ be a pseudofinite group of finite centraliser dimension. Assume that $m_2(P) \leqslant 2$. If for any non-trivial $H  \unlhd P$ we have $C_P(H)=1$, then ${\rm Soc}(P)$ is isomorphic to one of the groups ${\rm PSL}_2(F), {\rm PSL}_3(F)$ or ${\rm PSU}_3(F)$ over a pseudofinite field $F$ of characteristic $\neq 2$. 

In particular, if the assumption `$m_2(P) \leqslant 2$' is replaced by an assumption `$P \leqslant G$, where $G$ is a connected odd type group of finite Morley rank with ${\rm pr}_2(G)=1$', then ${\rm Soc}(P) \cong {\rm PSL}_2(F)$ over a pseudofinite field $F$ of characteristic $\neq 2$.  \end{propo}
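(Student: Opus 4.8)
The plan is to follow the proof of Fact~\ref{propo:pf}, locate its single appeal to the classification of finite simple groups, and replace that step by Theorem~\ref{th:finite2-rank2}. First I would record that the hypothesis ``$C_P(H)=1$ for all non-trivial $H\unlhd P$'', together with $m_2(P)\leq 2$ and finite centraliser dimension, forces $S:={\rm Soc}(P)$ to be a simple pseudofinite group: a minimal normal subgroup $M$ of $P$ is non-abelian (else $M\leq C_P(M)$) and hence a direct product $M_1\times\dots\times M_k$ of isomorphic non-abelian simple groups, each of $2$-rank at least $2$ by the theorems of Burnside and Brauer--Suzuki (both CFSG-free), so $m_2(M)=k\cdot m_2(M_1)\leq m_2(P)\leq 2$ gives $k=1$; and $M$ is the unique minimal normal subgroup, since a second one $M'$ would satisfy $M'\leq C_P(M)=1$. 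Then $S=M$ is simple, $C_P(S)=1$ (applying the hypothesis to $H=C_P(S)$: $S\leq C_P(C_P(S))=1$ would contradict $S\neq 1$), and --- this being exactly the CFSG-free part of the proof of Fact~\ref{propo:pf}, which invokes CFSG, through Wilson's theorem, only to identify such an $S$ with a Chevalley group of a fixed Lie type --- $S$ is $\emptyset$-definable, and writing $P\equiv\prod_{i\in I}P_i/\mathcal U$ with $P_i$ finite one may take the $P_i$ finite almost simple with $T_i:={\rm Soc}(P_i)$ finite simple and $S=\prod_{i\in I}T_i/\mathcal U$ an isomorphism (using definability of $S$ and uniform simplicity).

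Now the argument is quick. Since ``$m_2\leq 2$'' is first-order and $m_2(T_i)=m_2(S)\leq m_2(P)\leq 2$ for $\mathcal U$-almost all $i$, Theorem~\ref{th:finite2-rank2} (itself CFSG-free) gives $T_i\in\{{\rm PSL}_2(q_i),{\rm PSL}_3(q_i),{\rm PSU}_3(q_i),{\rm PSU}_3(4),A_7,M_{11}\}$ for $\mathcal U$-almost all $i$, with $q_i$ odd in the three infinite families. As $S=\prod_i T_i/\mathcal U$ is infinite, $\{i:T_i\in\{{\rm PSU}_3(4),A_7,M_{11}\}\}\notin\mathcal U$, so one of the three infinite families occurs $\mathcal U$-often; say $\{i:T_i\cong{\rm PSL}_2(q_i)\}\in\mathcal U$. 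Since ${\rm PSL}_2(2^k)$ has $2$-rank $k$ and $S$ is infinite, $q_i$ is odd for $\mathcal U$-almost all $i$, so $F:=\prod_i\mathbb F_{q_i}/\mathcal U$ is a pseudofinite field of characteristic $\neq 2$; and because ${\rm PSL}_2$ (resp. ${\rm PSL}_3$, ${\rm PSU}_3$) commutes with ultraproducts, $S\cong{\rm PSL}_2(F)$ (resp. ${\rm PSL}_3(F)$, ${\rm PSU}_3(F)$). This is the first assertion.

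For the ``in particular'' clause, $P\leq G$ with $G$ connected of odd type and ${\rm pr}_2(G)=1$. By the structure of Sylow $2$-subgroups of odd type groups of finite Morley rank with ${\rm pr}_2(G)=1$ (\cite{Borovik-Nesin, ABC}: a Sylow $2$-subgroup has connected part $\cong\mathbb Z_{2^\infty}$), $m_2(P)\leq m_2(G)\leq 2$, so the first part applies and $S$ is ${\rm PSL}_2(F)$, ${\rm PSL}_3(F)$ or ${\rm PSU}_3(F)$ over a pseudofinite field $F$ of characteristic $\neq 2$. To exclude the last two: these have Lie type $A_2$ and ${}^2A_2$ respectively, so the definable closure $\overline{S}\leq G$, which inherits from $S$ a $BN$-pair over $F$, should by an identification argument along the lines of \cite{Karhumaki-Ugurlu} (and the theory of groups of finite Morley rank with a $BN$-pair) be a Chevalley group of the same type over an algebraically closed field $K$ --- namely ${\rm PGL}_3(K)$ or ${\rm PGU}_3(K)\cong{\rm PGL}_3(K)$ --- hence of Pr\"{u}fer $2$-rank $2$; since Pr\"{u}fer $2$-rank is monotone along definable subgroups, this contradicts ${\rm pr}_2(G)=1$. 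Therefore $S\cong{\rm PSL}_2(F)$.

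The two steps needing care --- and the main obstacles --- are: verifying that the proof of Fact~\ref{propo:pf}, and of the results on pseudofinite groups of finite centraliser dimension underlying it, uses CFSG \emph{only} in passing from ``$S$ a simple pseudofinite group'' to ``$S$ a Chevalley group of a fixed Lie type'', so that the reduction to ``$S=\prod_i T_i/\mathcal U$ with $T_i$ finite simple'' is CFSG-free; and the Lie type $A_2$ / ${}^2A_2$ identification excluding ${\rm PSL}_3$ and ${\rm PSU}_3$ (the naive comparison of $2$-tori already handles the case when $F^{\times}$ has unbounded $2$-divisibility, but the general case seems to need the identification). Both are expected to be CFSG-free, the latter via Tits' classification of Moufang polygons together with the algebraic closedness of infinite fields interpretable in structures of finite Morley rank.
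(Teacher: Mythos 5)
Your proposal has two genuine gaps, both at places you yourself flag as ``main obstacles'' but leave unresolved. First, your reduction of the first assertion rests on the claim that the passage from $P$ to ``$S=\prod_i T_i/\mathcal{U}$ with $T_i$ finite simple'' (uniform definability of the socle, minimal normal subgroups of the pseudofinite $P$ being direct products of simple groups, each factor of $2$-rank at least $2$) is the CFSG-free part of the proof of Fact~\ref{propo:pf}. That is not so: in the general setting the $\emptyset$-definability of ${\rm Soc}(P)$ is obtained from uniform covering properties (Ellers--Gordeev, Ellers--Gordeev--Herzog), which are only available once one already knows the finite socle factors are of Lie type of bounded Lie rank --- and that identification is exactly where CFSG (via Wilson) enters. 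Your Burnside/Brauer--Suzuki argument and the decomposition of a minimal normal subgroup into simple factors are finite-group facts applied directly to the infinite group $P$ without justification. The paper's proof avoids this circularity by reversing the order of steps: it transfers $m_2\leqslant 2$ and the absence of abelian normal subgroups to the finite groups $P_i$ by \L o\'s, applies Theorem~\ref{th:finite2-rank2} there to get ${\rm Soc}(P_i)\in\{{\rm PSL}_2(q_i),{\rm PSL}_3(q_i),{\rm PSU}_3(q_i)\}$ with $q_i$ odd (the bounded exceptional cases being excluded because $|{\rm Soc}(P_i)|$ is unbounded), and only \emph{then} invokes the covering results for these specific small-rank families to write down the defining formula and conclude ${\rm Soc}(P)\equiv\prod_i{\rm Soc}(P_i)/\mathcal{U}$.

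Second, your exclusion of ${\rm PSL}_3(F)$ and ${\rm PSU}_3(F)$ in the ``in particular'' clause would not work as written: it appeals to an identification of $\overline{S}$ with ${\rm PGL}_3(K)$ or ${\rm PGU}_3(K)$ over an algebraically closed field via a $BN$-pair/Moufang-polygon argument. No such identification is available here --- producing it from a (pseudofinite) $BN$-pair of the socle is essentially the open problem the supertight programme is about, and in this proposition $P$ is merely a pseudofinite subgroup of $G$ (there is no supertight automorphism, so not even $\overline{S}=G$ is given). The paper's argument is elementary by comparison: since ${\rm pr}_2(G)=1$ and $G$ is of odd type, the Sylow $2$-subgroups of $G$ are $\mathbb{Z}_{2^\infty}\rtimes\langle w\rangle$ with $w$ inverting the torus (Deloro--Jaligot), whence by \L o\'s the finite socles ${\rm Soc}(P_i)$ have dihedral Sylow $2$-subgroups, and the Gorenstein--Walter theorem (CFSG-free) forces ${\rm Soc}(P)\cong{\rm PSL}_2(F)$. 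You should replace your identification step by this Sylow-theoretic argument.
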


\begin{proof}Let $P\equiv \prod_{i\in I}P_i/\mathcal{U}$, where $P_i$ is a finite group. We argue modulo $\mathcal{U}$ even if implicitly. Note that $m_2(P_i) = m_2(P)$ (\L os's Theorem). By assumption $P$ has no abelian normal subgroups and therefore, by \L os's Theorem, neither does $P_i$. So the socle ${\rm Soc}(P_i)$ of the finite group $P_i$ is a finite direct product $M_{i,1}\times \cdots \times M_{i,\ell}$ of minimal normal groups. For each $m\in \{1, \ldots, \ell\}$, the minimal normal group $M_{i,m}$ is a finite direct product of isomorphic finite simple groups, each of which is by assumption of $2$-rank $2$ (a finite simple group, say $H$, of $m_2(H)\leqslant 2$ necessary has $m_2(H)=2$, see \cite[page 13]{Gorenstein1983}). So Theorem~\ref{th:finite2-rank2} implies that $\ell=1$ and ${\rm Soc}(P_i)$ is simple non-abelian with $m_2({\rm Soc}(P_i))=2$. Note also that there is no common bound on the size $|{\rm Soc}(P_i)|$ (when $i$ varies) as for otherwise there would be a bound on the size $|P_i| \leqslant |{\rm Aut}({\rm Soc}(P_i))|$, but $P$ is infinite. So, by Theorem~\ref{th:finite2-rank2}, ${\rm Soc}(P_i)$ is isomorphic to one of the groups ${\rm PSL}_2(q_i), {\rm PSL}_3(q_i)$ or ${\rm PSU}_3(q_i)$ for $q_i$ odd.

We still need to show that ${\rm Soc}(P)\equiv \prod_{i\in I} {\rm Soc}(P_i)/\mathcal{U}$; this can done exactly as in \cite[Proposition 3.1]{Karhumaki-Ugurlu}. We repeat the short argument here. As $q_i>8$, by \cite{Ellers-Gordeev1998}, there is $x_i \in {\rm Soc}(P_i)$ so that ${\rm Soc}(P_i)= x_i^{{\rm Soc}(P_i)} x_i^{{\rm Soc}(P_i)} =  x_i^{P_i} x_i^{P_i}.$ Moreover, by \cite{Ellers-Gordeev-Herzog}, there is $n$, which does not depend on $i$, so that the socle ${\rm Soc}(P_i)$ has the $n$-covering property (i.e. for any $s_i \in {\rm Soc}(P_i)$, $n$ products of the conjugacy class $s_i^{{\rm Soc}(P_i)}$ cover the group ${\rm Soc}(P_i)$). Let $\varphi(x)$ be the formula below expressing in any group $H$ that:
$$\exists y (x\in y^Hy^H \wedge  y^Hy^H\trianglelefteq H  \wedge y^Hy^H\text{ has the $n$-covering property}).$$ Then $\varphi(P_i)$ defines ${\rm Soc}(P_i)$ so ${\rm Soc}(P)=\varphi(P)\equiv \prod_{i\in I} {\rm Soc}(P_i)/\mathcal{U}$ as we wanted.

Finally, instead of `$m_2(P) \leqslant 2$' assume that $P \leqslant G$, where $G$ is as in the claim. Then $m_2(G) \leqslant 2$ by the structure of the Sylow $2$-subgroups in $G$ \cite[Proposition 27]{Deloro-Jaligot} and therefore, by above, $m_2(G)=2$ and ${\rm Soc}(P)$ is isomorphic to one of the groups ${\rm PSL}_2(F), {\rm PSL}_3(F)$ or ${\rm PSU}_3(F)$ over a pseudofinite field $F$ of characteristic $\neq 2$. Combining the fact that $m_2(G)=2$ and \cite[Proposition 27]{Deloro-Jaligot}, we get that the Sylow $2$-subgroups of $G$ are isomorphic to $\mathbb{Z}_{2^\infty} \rtimes \langle w \rangle$ where $w$ is an involution inverting $\mathbb{Z}_{2^\infty}$. Therefore, by \L os's Theorem, the simple group ${\rm Soc}(P_i)$ has dihedral Sylow $2$-subgroups (for detail, see \cite[Proposition 1.4]{Karhumaki-Ugurlu}). It then follows from the Gorenstein-Walter classification of finite simple groups with dihedral Sylow $2$-subgroups \cite{Gorenstein-Walter} (another important part of CFSG which does not use CFSG itself) that ${\rm Soc}(P)\cong {\rm PSL}_2(F)$. \end{proof}

We may now conclude that Proposition~\ref{th:CFSG} holds:

\begin{propo}\label{propo:CFSG}Let $G$ be as in Conjecture~\ref{conj:tight} with ${\rm pr}_2(G)=1$. Then by \cite[Theorem 1.2]{Karhumaki-Ugurlu} $G\cong {\rm PGL}_2(K)$ for $K$ an algebraically closed field of characteristic $\neq 2$. This identification does not depend on CFSG.
\end{propo}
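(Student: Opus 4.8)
The plan is to inspect the proof of \cite[Theorem 1.2]{Karhumaki-Ugurlu} and to isolate every appeal to CFSG. The classification is used there in exactly one place, namely through Fact~\ref{propo:pf}: it is invoked to conclude that $S_n = {\rm Soc}(C_G(\alpha^n))$ is a (twisted) Chevalley group over a pseudofinite field and, when ${\rm pr}_2(G)=1$, that $S_n \cong {\rm PSL}_2(F_n)$ for a pseudofinite field $F_n$ of characteristic $\neq 2$. Every other ingredient of that proof — the description of the centraliser of an involution in $C_G(\alpha)$, the proof that $C_G(\alpha)/S$ is finite, and the concluding recognition of $G$ — rests only on the finite Morley rank machinery of \cite{Borovik-Nesin, ABC}, on the structure theory of ${\rm PSL}_2$ over (pseudo)finite fields, and on recognition results that do not themselves rely on CFSG.

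First I would observe that, under the hypothesis ${\rm pr}_2(G)=1$, this one use of Fact~\ref{propo:pf} can be replaced by Proposition~\ref{propo:pf-noCFSG}, whose proof is CFSG-free. Put $P_n := C_G(\alpha^n) \leqslant G$, a pseudofinite group of finite centraliser dimension. Exactly as at the start of the proof of Theorem~\ref{th:sylows}, since $G$ is simple and $\alpha$ is tight one has $\overline{H}_n = G$ for every non-trivial $H_n \unlhd\unlhd P_n$, whence $C_{P_n}(H_n) = 1$; in particular $C_{P_n}(H_n)=1$ for every non-trivial $H_n \unlhd P_n$. As $G$ is a connected odd type group of finite Morley rank with ${\rm pr}_2(G)=1$, the second part of Proposition~\ref{propo:pf-noCFSG} applies to $P_n \leqslant G$ and gives ${\rm Soc}(P_n) \cong {\rm PSL}_2(F_n)$ over a pseudofinite field $F_n$ of characteristic $\neq 2$ — precisely the conclusion drawn from Fact~\ref{propo:pf} in \cite{Karhumaki-Ugurlu}, now obtained without CFSG. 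Likewise, wherever the argument of \cite{Karhumaki-Ugurlu} needs $C_G(\alpha)/S$ to be finite one may instead cite Proposition~\ref{propo:finite-index} of the present paper, whose proof uses only finite Morley rank technology together with the structure of (twisted) Chevalley groups. Substituting these two CFSG-free inputs then turns the proof of \cite[Theorem 1.2]{Karhumaki-Ugurlu} into a CFSG-free proof that $G \cong {\rm PGL}_2(K)$ for some algebraically closed field $K$ of characteristic $\neq 2$.

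The hard part will be the bookkeeping: one must trace \cite[Theorem 1.2]{Karhumaki-Ugurlu} line by line to confirm that CFSG genuinely enters only via Fact~\ref{propo:pf}, and in particular that the recognition theorem used there to identify $G$ with ${\rm PGL}_2(K)$, as well as the auxiliary facts about odd type groups of small Pr\"{u}fer rank (such as \cite[Proposition 27]{Deloro-Jaligot}) and about ${\rm PSL}_2$ over pseudofinite fields, are themselves independent of CFSG. Once that verification is carried out there is nothing further to check, and the identification $G \cong {\rm PGL}_2(K)$ follows without CFSG.
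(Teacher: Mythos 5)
Your proposal is correct and takes essentially the same route as the paper: CFSG enters \cite[Theorem 1.2]{Karhumaki-Ugurlu} only through Fact~\ref{propo:pf}, and under ${\rm pr}_2(G)=1$ that single input is replaced by Proposition~\ref{propo:pf-noCFSG} (whose hypotheses you verify via tightness, just as in the proof of Theorem~\ref{th:sylows}). One small caution: your additional appeal to Proposition~\ref{propo:finite-index} is unnecessary and, as that proposition is proved in the general setting via Fact~\ref{propo:pf}, would need the same CFSG-free substitution to avoid circularity; the paper instead simply notes that Fact~\ref{propo:pf}(b) uses no CFSG beyond part (a), so the finiteness of $C_G(\alpha)/S$ established inside \cite[Theorem 1.2]{Karhumaki-Ugurlu} already goes through unchanged.
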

\begin{proof} The identification of $G$ in \cite{Karhumaki-Ugurlu} uses Fact~\ref{propo:pf} which uses CFSG. However, this is the only place in the proof where CFSG is needed. Note also that Fact~\ref{propo:pf}(b) uses part (a) but other than that it does no use CFSG. Now, in the proof of \cite[Theorem 1.2]{Karhumaki-Ugurlu}, instead of using Fact~\ref{propo:pf}(a) one may use Proposition~\ref{propo:pf-noCFSG} (if needed, see \cite[page 10]{Karhumaki-Ugurlu}). This allows one to remove the use of CFGS from the proof.\end{proof}

We finish the paper with a comment on further attempts of removing the use CFSG from the supertight programme. As noted earlier, it seems unlikely that one will reprove Fact~\ref{propo:pf} without CFSG. However, one could hope to prove Conjecture~\ref{conj:tight} without CFSG using the following strategy. If ${\rm pr}_2(G)=2$ then one hopes to identify the socle $S_n={\rm Soc}(C_G(\alpha^n))$ using the classification of finite simple groups with ${\rm BN}$-pair of \emph{Tits rank $2$}. It is not obvious (without CFSG) that ${\rm pr}_2(G)=2$ implies that $S_n$ is elementary equivalent to an ultraproduct of finite simple groups with ${\rm BN}$-pair of Tits rank $2$ but it seems feasible to prove this. In the case ${\rm pr}_2(G) \geqslant 3$, it does not seems feasible to identify the $S_n$'s. Instead, one could hope that using similar techniques as in \cite{Berkman-Borovik2011}, one may identify $G$ without using CFSG (i.e. proving a new version of \cite[Theorem 1.1]{Berkman-Borovik2011} in the supertight set-up, in which we only know that $S_n$'s are elementary equivalent to non-principal ultraproducts of finite simple groups).

\section*{Acknowledgements} I would like to thank P\i nar U\u{g}urlu Kowalski for numerous helpful discussions related to the supertight programme and P\i nar U\u{g}urlu Kowalski and Adrien Deloro for fruitful discussions during the meeting `Groups of finite Morley rank, new directions' at the Nesin Mathematical Village. I would also like to thank Alexandre Borovik for his several useful comments.

\bibliographystyle{plain}
\bibliography{Ulla.2021}

\end{document}